\documentclass[a4 paper,11pt]{amsart}

\usepackage{amsmath} %
\usepackage{amssymb} %
\usepackage{amscd} %
\usepackage{amsthm} %
\usepackage{mathrsfs} %
\usepackage{euscript}
\usepackage{eufrak}
\usepackage{ulem}
\usepackage[alphabetic]{amsrefs}
\usepackage{ascmac} % 枠付き
\usepackage{comment}
\usepackage{bm}%太字

\usepackage[dvipdfmx]{graphicx} 

\usepackage{hyperref}
\usepackage{xcolor}
\definecolor{unbleu}{rgb}{0.03, 0.15, 0.4}

\hypersetup{
pdfborder = {0 0 0},
colorlinks,
linkcolor=unbleu,
citecolor=unbleu,
urlcolor=unbleu
}

\usepackage{fancyhdr}
 \newtheorem{theorem}{Theorem}[section]
 \newtheorem{lemma}[theorem]{Lemma}
 \newtheorem{proposition}[theorem]{Proposition}

\newtheorem{corollary}[theorem]{Corollary}

\newtheorem{maintheorem}{Theorem} %\theoremstyle{definition}の下におくと斜体にならない!
  
\theoremstyle{definition}
\newtheorem{definition}[theorem]{Definition}
\newtheorem{remark}[theorem]{Remark}

\newcommand{\Wdim}{{\rm Widim}}
\newcommand{\rdim}{{\rm rdim}}
\newcommand{\mdim}{{\rm mdim}}
\newcommand{\diam}{{\rm diam}}
\newcommand{\B}[1]{\boldsymbol{#1}}%

\begin{document}

\title[]{Rate distortion dimension of Gibbs measures for functions depending on the first two coordinates on the full shift of Ahlfors regular spaces}
%"Constraint Ergodic" -> "Constrained ergodic" 

\author[K. Inui]{Kanji Inui}
\address{Department of mechanical engineering and science, Faculty of science and engineering, Doshisha University}
\email{kinui@mail.doshisha.ac.jp}

\author[M. Shinoda]{Mao Shinoda}
\address{Department of Mathematics, Ochanomizu University, 2-1-1 Otsuka, Bunkyo-ku, Tokyo, 112-8610, Japan}
\email{shinoda.mao@ocha.ac.jp}

% \date{\today}
\subjclass[2020]{\textcolor{black}{Primary} 37A05,37D35, 94A34,37C30}
%\textcolor{black}{; Secondary 37J51}}
%Tsu20 など
%37A05 Dynamical aspects of measure-preserving transformations
%37B99 None of the above, but in this section
%94A34  	Rate-distortion theory in information and communication theory
%CPV
%37D35  	Thermodynamic formalism, variational principles, equilibrium states for dynamical systems
%28D20  	Entropy and other invariants
%37B40  	Topological entropy
%37C30  	Functional analytic techniques in dynamical systems; zeta functions, (Ruelle-Frobenius) transfer operators, etc.
%cf https://mathscinet.ams.org/msc/msc2020.html?t=28DXX&s=37C30&btn=Search&ls=Ct
\keywords{}

%%%%%%%%%%%%%%%%%%%%%%%%%%%%%%%%%%%%%%%%%%%%%%%%%%%%%%%%%
%%%%%%%%%%%%%%%%%%%%%%%%%%%%%%%%%%%%%%%%%%%%%%%%%%%%%%%%%
\begin{abstract}
    The study on shift spaces in ergodic theory has extended beyond the classical setting, but there is room to discuss an extension of the Kolmogorov-Sinai entropy in the ergodic theoretical point of view.  
    On the other hand, the rate distortion dimension recently attracted attention in mean dimension theory because it behaves like the Kolmogorov-Sinai entropy on dynamical systems in the ``large" spaces in which the usual entropy is in general infinite. 
    According to these background, we investigate the connection between the Gibbs measure on the product spaces and the variational principle based on the rate distortion dimension. Concretely we calculate the rate distortion dimension of the Gibbs measure for a H\"older continuous potential depending on the first two coordinates and it satisfies the simplest case of themodynamic formalism based on the rate distortion dimension: the extension of the maximal measure of topological entropy. 
    Notably, to the best of our knowledge, this is the first study to introduce Gibbs measures into mean dimension theory, and this connection allows the mean dimension with potential to be computed more naturally through the notion of the zero-temperature limit, in the spirit of thermodynamic formalism.
\end{abstract}

\maketitle
% %\newpage

% \tableofcontents
%%%%%%%%%%%%%%%%%%%%%%%%%%%%%%%%%%%%%%%%%%%%%%%%%%%%%%%%

% \textcolor{red}{
% \begin{itemize}
%     \item $(\mathcal{X},d_{\mathcal{X}})$ with a Borel probability measure $\rho$.
%     w.l.g. we may assume $\diam \mathcal{X}=1$.
%     \item $(\mathcal{X}^{\mathbb{N}_0}, d)$
%     \item $\underline{x}\in \mathcal{X}^{\mathbb{N}_0}$
%     \item $\varphi(\underline{x})=\varphi(x_0,x_1)$
% \end{itemize}
% }

\section{Introduction}
    The study on shift spaces in ergodic theory has extended beyond the classical setting. 
    Baraviera et.al. consider general full shift of the countable product of a compact metric space, which is the extended version of classical settings studied by many researchers in ergodic theory (see for example \cite{LMST09, BCLMS2011, LMMS2015}).  
    They introduce Gibbs measures on the spaces and study a connection with statistical mechanics in mathematical physics. 
    Note that we need the a priori measure on the space to introduce the extended Ruelle operator (with respect to the a priori measure) in the above setting, which construct of the Gibbs measures, similar to the classical settings (for the classical setting, see \cite{Bow75}). 
    In addition, Cioletti et.al. extended the Gibbs measures with respect to the continuous potentials on the space. 
    They discuss the uniqueness criterion and thermodynamic formalism (see \cite{CLS20}). 

    However, there is room to discuss an extension of the Kolmogorov-Sinai entropy in the ergodic theoretical point of view, since the classical Kolmogorov-Sinai entropy on the countable product of a compact metric space does not work in general. 
    To address this issue, the papers \cite{LMST09}, \cite{BCLMS2011}, \cite{LW20} and \cite{CLS20} extend the themodynamic formalism from an optimization theoretic perspective, introducing a notion of entropy defined indirectly. While this approach is natural in view of convex analysis, the concrete computation of the entropy is not straightforward.
    
    On the other hand, the rate distortion dimension recently attracted attention in mean dimension theory because it behaves like the Kolmogorov-Sinai entropy on dynamical systems in the ``large" spaces in which the usual entropies is in general infinite. 
    The rate distortion dimension is a concept (with respect to a measure) originally introduced in information theory (\cite{KD1994}). 
    Tsukamoto and Lindenstrauss introduce the rate distortion dimension in the mean dimension theory (\cite{LinTsu18}) to point out a new connection between information theory and dynamical systems. 
    In addition, Lindenstrauss and Tsukamoto establish the double variational principle in \cite{LinTsu19} and Tsukamoto extends it to the double variational principle with the potential in \cite{Tsu20}, based on the technique in ergodic theory. 
    The above variational principles are interesting formulae in the mean dimension theory. 
    
   Against this background, a natural question arises: for dynamical systems on ``large" spaces, can the rate distortion dimension be regarded as a natural extension of the Kolmogorov-Sinai entropy from an ergodic-theoretic perspective?
    If so, then there are two advantages in the formulation based on the rate distortion dimension. 
    First, the notion of the rate distortion dimension is derived from the ergodic theory and the dimension theory. 
    This advantage may calculate its value concretely by applying the dimension theoretical technique.  
    Second, the definition of the rate distortion dimension is independent of the a priori measure. 
    This advantage may formulate the variational principles without the a priori measure and formulate the variational principles without the a priori measure.  
    Note that in the classical thermodynamic formalism, for a H\"older continuous function, there exists a unique invariant measure (called an equilibrium measure) which attains the topological pressure, and the unique measure is the Gibbs measure (see, \cite{D68} or \cite{Bow75}). 
    
    The aim of this paper is to investigate the connection between the Gibbs measure on the product spaces and the variational principle based on the rate distortion dimension. 
    To consider the connection, we specifically consider a slightly general setting than the examples considered in (\cite{LinTsu18}): the one-sided infinite product $\mathcal{X}^{\mathbb{N}_{0}}$ of compact Ahlfors regular spaces $\mathcal{X}$. 
    Typical examples of Ahlfors regular spaces are connected compact manifolds and self-similar sets and self-conformal sets satisfying the open set condition (see \cite{Fr14} and \cite{BSS2023}). 
    In particular, the case where the regularity exponent equals $0$ corresponds to $\mathcal{X}$ being a finite set, so that $\mathcal{X}^{\mathbb{N}_{0}}$ reduces to the classical one-sided full shift over a finite alphabet; in this sense, our setting is a natural extension of the classical theory.
    Next, as in \cite{LMST09}, \cite{BCLMS2011} and \cite{CLS20}, we introduce the Gibbs measure with respect to H\"older continuous potentials depending on the first two coordinates of $\mathcal{X}^{\mathbb{N}_{0}}$. 
    Note that \cite{LMST09} and \cite{BCLMS2011} present the case $\mathcal{X} = [0, 1]$: they consider the Gibbs measure for the H\"{o}lder continuous potential depending on the first two coordinates of $[0,1]^{\mathbb{N}_0}$. 
    Then, we concretely calculate the rate distortion dimension of the Gibbs measure on $\mathcal{X}^{\mathbb{N}_{0}}$ and it satisfies the simplest case of the themodynamic formalism based on the rate distortion dimension: the extension of the measure of maximal entropy.  
    
    The precise statement is the following: 
    let $\mathcal{X}^{\mathbb{N}_0}$ be the one-sided infinite product space where $\mathbb{N}_{0} := \{ \ n \in \mathbb{Z} \ | \ n \geq 0 \ \}$ and $\mathcal{X}$ an Ahlfors $s$-regular ($s \geq 0$) compact metric space (for the definition of Ahlfors regular space, see Subsection \ref{sebsec:Ahlfors_reg_and_rel_dim}). 
    Without loss of generality, the Ahlfors regularity implies the existence of a Borel probability measure $\rho$ satisfying the inequality (\ref{eq:Ahlfors_regular}) (for the inequality (\ref{eq:Ahlfors_regular}), see Subsection (\ref{sebsec:Ahlfors_reg_and_rel_dim})). 
    Henceforth, we call $\rho$ the a priori measure on $\mathcal{X}$. 
    If $\mathcal{X}=\mathbb{R}/\mathbb{Z}$, the shift space is also called the XY-model or the modified Bernoulli space. 
    Then, we set a metric $d$ on $\mathcal{X}^{\mathbb{N}_0}$ defined by
    \begin{align*}
        d(\underline{x},\underline{y})=\sum_{n\in \mathbb{N}_{0}}\frac{d_{\mathcal{X}}(x_n, y_n)}{2^{n}} 
    \end{align*}
    and the left shift $\sigma$ on $\mathcal{X}^{\mathbb{N}_0}$. 
    We assume the following assumption on potentials $\varphi \colon \mathcal{X}^{\mathbb{N}_0}\rightarrow \mathbb{R}$: $\varphi(\underline{x})=\varphi(\underline{y})$ if $x_i=y_i$ for $i=0,1$.
    Henceforth, we say that $\varphi$ depends on the first two coordinates if $\varphi$ satisfies the above assumption. 
    Abuse of notation, we write $\varphi(\underline{x})$ as $\varphi(x_0, x_1)$ \ ($\underline{x} = \{x_{i}\}_{i \in \mathbb{N}_{0}} \in \mathcal{X}^{\mathbb{N}_{0}} $). 
    Next, as in \cite{LMST09},  \cite{LMST09} or \cite{BCLMS2011}, we define the Gibbs measure $\mu_{\varphi}$ for $\varphi$ and the a priori measure $\rho$ (for details, see Subsection \ref{sebsec:Gibbs_meas}).
    Let ${\rm rdim}(\mathcal{X}^{\mathbb{N}_0},\sigma,d,\mu_{\varphi})$ be the rate distortion dimension with respect to $\mu_{\varphi}$, $\sigma$ and $d$ ( for details, see Subsection \ref{subsec:rate_distortion_dimension}). 
    Now, we state our first main theorem. 
    \begin{maintheorem}
    \label{main:Gibbs_RDD} 
        Let $\mathcal{X}^{\mathbb{N}_0}$ be the one-sided shift space of an Ahlfors $s$-regular \textcolor{red}{($s>0$) }compact space, with the metric $d$ and the left shift $\sigma$ defined as above. 
        %Let $\rho$ be a Borel probability measure on $\mathcal{X}$ satisfying \eqref{eq:Ahlfors_regular}.
        Then, for the Gibbs measure $\mu_\varphi$ with respect to the a priori measure $\rho$ and a H\"older continuous function $\varphi: \mathcal{X}^{\mathbb{N}_0}\rightarrow\mathbb{R}$ depending on the first two coordinates, we have
        \begin{align*}
            {\rm rdim}(\mathcal{X}^{\mathbb{N}_0},\sigma,d,\mu_{\varphi})=s.
        \end{align*}
    \end{maintheorem}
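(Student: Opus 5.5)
We prove the two inequalities ${\rm rdim}(\mathcal{X}^{\mathbb{N}_0},\sigma,d,\mu_{\varphi})\le s$ and $\ge s$ separately. Throughout we use the definition from Subsection \ref{subsec:rate_distortion_dimension}:
\[
{\rm rdim}=\lim_{\varepsilon\to0}\frac{R(\varepsilon)}{\log(1/\varepsilon)},\qquad R(\varepsilon)=\lim_{n\to\infty}\frac1n\inf I(\underline{X};\underline{Y}).
\]
Here $\underline{X}=(X_0,\dots,X_{n-1})$ is distributed according to the orbit segment of a $\mu_\varphi$-random point, and the infimum runs over couplings with $\mathbb{E}\,\frac1n\sum_{k<n} d(\sigma^k\underline{x},\sigma^k\underline{y})<\varepsilon$.

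The key structural input is that, since $\varphi$ depends only on $(x_0,x_1)$, the Gibbs measure from Subsection \ref{sebsec:Gibbs_meas} is a stationary Markov measure with respect to $\rho$. Its transition kernel is $P(x,dy)=\frac{e^{\varphi(x,y)}h(y)}{\lambda h(x)}\rho(dy)$, where $h$ is the positive continuous eigenfunction of the Ruelle operator. Consequently the finite-dimensional marginals $\mu_\varphi\circ(x_0,\dots,x_{m})^{-1}$ have densities with respect to $\rho^{\otimes(m+1)}$ that are bounded above and below by $C^{\pm1}e^{\pm cm}$, with constants independent of $m$.

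\emph{Upper bound.} Fix $\varepsilon$ and choose $L$ with $2^{-L}<\varepsilon$. Because $\sum_{j>L}2^{-j}\operatorname{diam}\mathcal{X}\lesssim\varepsilon$, we have $d(\underline{x},\underline{y})\le 2\max_{j\le L}d_{\mathcal{X}}(x_j,y_j)+O(\varepsilon)$. It therefore suffices to quantize each coordinate separately. Ahlfors $s$-regularity gives an $\varepsilon$-net $N_\varepsilon$ of $\mathcal{X}$ with $|N_\varepsilon|\le C\varepsilon^{-s}$. Take $Y_k$ to be the nearest net point to $x_k$, coordinatewise. Then
\[
I(\underline{X};\underline{Y})\le H(\underline{Y})\le (n+L)\log(C\varepsilon^{-s}),
\]
which gives $R(C'\varepsilon)\le s\log(1/\varepsilon)+O(1)$ and hence ${\rm rdim}\le s$. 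This step does not use the Gibbs property at all.

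\emph{Lower bound.} This is the main obstacle. The standard route, as in \cite{LinTsu18} for $[0,1]^{\mathbb{Z}}$ with Lebesgue product measure, is the following. Given a coupling with average distortion $<\varepsilon$, the first coordinates satisfy $\mathbb{E}\frac1n\sum_k d_{\mathcal{X}}(X_k,Y_k)<\varepsilon$. One then bounds $I(\underline{X};\underline{Y})$ from below by an entropy-type quantity using a \emph{Ahlfors-regular rate distortion lemma}. For any random variable $X$ on $\mathcal{X}$ whose law has density bounded by $K$ with respect to $\rho$, and any $Y$ with $\mathbb{E}\,d_{\mathcal{X}}(X,Y)<\delta$, we want
\[
I(X;Y)\ge s\log(1/\delta)-\log K-C.
\]
I would prove this by the standard argument. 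For each $y$, conditional on $Y=y$, the variable $X$ is concentrated in a ball $B(y,r)$ of $\rho$-measure $\lesssim r^s$, by Markov's inequality applied to the distortion. The density bound then caps the conditional probability mass near $y$. Convexity (the Donsker--Varadhan / Gibbs variational form of mutual information) converts this into the displayed bound. One can also phrase it as $I(X;Y)\ge \mathbb{E}\log\frac{dP_{X|Y}}{dP_X}$ and test against the kernel $e^{-\beta d(x,y)}$ normalized by its $\rho$-integral, which is $\asymp\beta^{-s}$ by Ahlfors regularity; taking $\beta=1/\delta$ yields the bound.

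To pass to $n$ coordinates, apply the multi-letter version. The law of $\underline{X}$ has density $\le C e^{cn}$ with respect to $\rho^{\otimes n}$. Test $I(\underline{X};\underline{Y})$ against the product kernel $\prod_k e^{-\beta d_{\mathcal{X}}(x_k,y_k)}$, whose normalization is $\asymp(\beta^{-s})^n$. This gives
\[
I(\underline{X};\underline{Y})\ge n\bigl(s\log\beta-\beta\varepsilon-C\bigr)-cn-C.
\]
The worry is the term $cn$: it is linear in $n$, but it does not grow with $1/\varepsilon$. After dividing by $n\log(1/\varepsilon)$ and letting $\varepsilon\to0$ with $\beta=1/\varepsilon$, it therefore vanishes. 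Hence ${\rm rdim}\ge s$.

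The delicate points are twofold. First, the density bound $e^{cn}$ must be uniform, which follows from continuity of $\varphi$ and $h$ on the compact space $\mathcal{X}^2$. Second, the Donsker--Varadhan estimate must be justified when the test kernel is not exactly a conditional density; I expect handling this to require a careful version of that lemma.
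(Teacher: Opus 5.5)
Your proof is correct, but it takes a different route from the paper on both halves.

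\textbf{Upper bound.} The paper constructs no code. It combines the general inequality $\overline{\rdim}\le\overline{\mdim}_M$ from \cite{LinTsu18} with $\mdim_M(\mathcal{X}^{\mathbb{N}_0},\sigma,d)=s$ (Proposition~\ref{main:mean_Hausdorff_Minkowski}). Your coordinatewise quantization is a self-contained special case of that inequality.

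\textbf{Lower bound.} The paper works only with finite-valued variables:
\begin{itemize}
\item it quantizes $X_m$ and the zeroth coordinate $Y_{m,0}$ by a maximal $\varepsilon_n$-separated set;
\item it uses the chain rule and the Markov property to reduce to the one-step term $H(\widetilde{X}_1\mid\widetilde{X}_0)$, and bounds that from below using two-sided ball estimates for $\rho$ and the bounds on $\theta$ and $K$;
\item it controls $H(\widetilde{X}_m\mid\widetilde{Y}_{m,0})$ with the Fano-type Lemma~\ref{lem:upper_relative_entropy}, together with Lemmas~\ref{lem:upper_adj} and~\ref{lem:upper_sep};
\item finally it lets $\kappa,L\to\infty$.
\end{itemize}
Your Donsker--Varadhan (Shannon lower bound) argument needs only the upper Ahlfors bound and the bound $Ce^{cn}$ on the density of the $n$-dimensional marginal of $\mu_\varphi$ with respect to $\rho^{\otimes n}$. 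This buys three things. It gives the estimate $R(\varepsilon)\ge s\log(1/\varepsilon)-O(1)$ directly, without the extra limits in $\kappa$ and $L$. It applies verbatim to any invariant measure with such a density bound. And it sidesteps the paper's passage from $H(\widetilde{X}_i\mid\widetilde{X}_0,\ldots,\widetilde{X}_{i-1})$ to $H(\widetilde{X}_1\mid\widetilde{X}_0)$. That passage needs care because the quantized process is not Markov; one should condition on $X_{i-1}$ rather than on $\widetilde{X}_{i-1}$. In exchange, the paper's discrete route keeps everything at the level of finite partitions and the chain rule, with no appeal to relative entropy of non-discrete laws.

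Two small points.

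First, the step you flag as delicate is not. Donsker--Varadhan holds for every bounded measurable test function. Set $Z(\underline{y})=\int e^{-\beta\sum_{k<n}d_{\mathcal{X}}(x_k,y_k)}\,d\rho^{\otimes n}(\underline{x})$, which lies in $[e^{-\beta n\diam\mathcal{X}},1]$. Then $g=-\beta\sum_{k<n}d_{\mathcal{X}}(x_k,y_k)-\log Z(\underline{y})$ is bounded. Let $P_X,P_Y$ be the laws of $(X_0,\dots,X_{n-1})$ and $(Y_{0,0},\dots,Y_{n-1,0})$. Then $\mathbb{E}_{P_X\otimes P_Y}e^{g}\le Ce^{cn}$, so $I\ge-\beta n\varepsilon-\mathbb{E}\log Z(\underline{Y})-\log C-cn$ follows at once.

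Second, the distortion constraint in the definition allows arbitrary $Y_0,\dots,Y_{n-1}$, not just the orbit of a single point $\underline{y}$. For the lower bound you should apply your estimate to the zeroth coordinates $Y_{k,0}$, using $d_{\mathcal{X}}(X_k,Y_{k,0})\le d(\sigma^kX,Y_k)$ and data processing. Your lower-bound argument in fact already does this.
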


\begin{remark}
    To the best of our knowledge, all previously known explicit computations of the rate distortion dimension have been restricted to the i.i.d.\ setting.
    Theorem \ref{main:Gibbs_RDD} gives, to our knowledge, the first such computation for stationary Markov measures.
    The Ruelle operator formalism introduced in Subsection \ref{sebsec:Gibbs_meas} is the natural tool for treating Markov measures on the full shift over a general alphabet; it is perhaps for this reason that such measures had not previously been studied from this perspective.
\end{remark}

   Since the rate distortion dimension is the measure-theoretic counterpart of the variational principle for mean dimension, we now relate Theorem \ref{main:Gibbs_RDD} to the mean metric dimension (for the definition of the mean metric dimension $\mdim_M$, see Subsection \ref{subsec:mean_dimensions}).
    For an Ahlfors $s$-regular $\mathcal{X}$, we have $\dim_B \mathcal{X}=s$, and $\mathcal{X}$ satisfies the assumption of Theorem D in \cite{CPV24}, that is, every nonempty subset $U$ of $\mathcal{X}$ satisfies $\dim_B U=s$ .
    Hence, by Theorem D in \cite{CPV24} with $\varphi=0$, we have
    \begin{align}
        \mdim_M(\mathcal{X}^{\mathbb{N}_0}, \sigma, d)=s.
        \label{eq:mean_metric_TheoremD}
    \end{align}
Theorem \ref{main:Gibbs_RDD} together with the above leads to the simplest case of the variational principle based on the rate distortion dimension: the extension of the measure of maximal entropy.
    
    % \begin{remark}
    %     For an Ahlfors $s$-regular compact metric space $\mathcal{X}$, $\dim_B \mathcal{X}=s$ and $\mathcal{X}$ satisfies the assumption of Theorem D in \cite{CPV24}, that is, every nonempty subset $U$ of $\mathcal{X}$ satisfies $\dim_BU=s$.
    %     Hence $\mdim_M(\mathcal{X}^{\mathbb{N}_0}, \sigma, d)=s$ follows from Theorem D in \cite{CPV24} with $\varphi=0$. 
    % \end{remark}
    
    %
    %\textcolor{red}{:revised to here}
    \begin{corollary} \label{maincor:maximal_entropy_of_measures}
        Under the same setting and assumption on Theorem \ref{main:Gibbs_RDD}, we have
        \begin{align*}
            \mdim_M(\mathcal{X}^{\mathbb{N}_0}, \sigma, d) 
            = \sup_{\mu\in \mathcal{M}_\sigma(\mathcal{X}^{\mathbb{N}_0})} {\rm rdim}(\mathcal{X}^{\mathbb{N}_0},\sigma,d,\mu)
            = {\rm rdim}(\mathcal{X}^{\mathbb{N}_0},\sigma,d,\mu_{\varphi}).  
        \end{align*}
    \end{corollary}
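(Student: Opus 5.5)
The corollary follows from three facts: the mean metric dimension identity \eqref{eq:mean_metric_TheoremD}, Theorem \ref{main:Gibbs_RDD}, and the general inequality ${\rm rdim}(\mathcal{X}^{\mathbb{N}_0},\sigma,d,\mu)\le \mdim_M(\mathcal{X}^{\mathbb{N}_0},\sigma,d)$, valid for every $\mu\in\mathcal{M}_\sigma(\mathcal{X}^{\mathbb{N}_0})$. I would establish the chain
\begin{align*}
s={\rm rdim}(\mathcal{X}^{\mathbb{N}_0},\sigma,d,\mu_{\varphi})\le \sup_{\mu\in \mathcal{M}_\sigma(\mathcal{X}^{\mathbb{N}_0})}{\rm rdim}(\mathcal{X}^{\mathbb{N}_0},\sigma,d,\mu)\le \mdim_M(\mathcal{X}^{\mathbb{N}_0},\sigma,d)=s,
\end{align*}
which forces all three quantities to equal $s$.

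The first equality is Theorem \ref{main:Gibbs_RDD}. The first inequality holds because $\mu_\varphi$ is $\sigma$-invariant, which is part of its construction as a Gibbs measure in Subsection \ref{sebsec:Gibbs_meas}. The last equality is \eqref{eq:mean_metric_TheoremD}, which was obtained from Theorem D of \cite{CPV24} using $\dim_B U=s$ for all nonempty $U\subset\mathcal{X}$ under Ahlfors $s$-regularity.

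The only remaining step is the middle inequality, ${\rm rdim}(\mu)\le\mdim_M$ for every invariant $\mu$. This is the easy half of the Lindenstrauss--Tsukamoto variational principle \cite{LinTsu18,LinTsu19}, and I would cite it there. If a self-contained argument is wanted, it goes as follows. For each $n$ and $\varepsilon$, cover $\mathcal{X}^{\mathbb{N}_0}$ by $\#(\mathcal{X}^{\mathbb{N}_0},d_n,\varepsilon)$ sets of $d_n$-diameter less than $\varepsilon$. Quantizing $\underline{x}$ to a chosen point of the set containing it gives a coupling with $d_n$-distortion below $\varepsilon$. Its mutual information is at most $\log\#(\mathcal{X}^{\mathbb{N}_0},d_n,\varepsilon)$. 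Hence $R_\mu(\varepsilon)\le \lim_n \frac1n\log\#(\mathcal{X}^{\mathbb{N}_0},d_n,\varepsilon)$, and dividing by $\log(1/\varepsilon)$ and taking $\limsup_{\varepsilon\to0}$ gives the bound.

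I expect no real obstacle; the substance lies in Theorem \ref{main:Gibbs_RDD}. The only point needing care is that the definition of ${\rm rdim}$ in Subsection \ref{subsec:rate_distortion_dimension} uses the same upper limit as $\mdim_M$ (or that we are comparing the upper versions), so that the inequality passes through the limits. If the paper uses $L^1$-distortion instead of an $L^\infty$-type condition, the quantization coupling still works, since pointwise distortion below $\varepsilon$ implies average distortion below $\varepsilon$.
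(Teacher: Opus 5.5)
Your proposal is correct and follows the paper's proof: the paper likewise combines the general inequality $\mdim_M(\mathcal{X}^{\mathbb{N}_0},\sigma,d)\geq\sup_{\mu\in\mathcal{M}_\sigma(\mathcal{X}^{\mathbb{N}_0})}{\rm rdim}(\mathcal{X}^{\mathbb{N}_0},\sigma,d,\mu)$ from \cite{LinTsu18} with Theorem \ref{main:Gibbs_RDD} and \eqref{eq:mean_metric_TheoremD} to force all three quantities to equal $s$. Your optional quantization sketch of that general inequality is also sound; the only detail to add is that the cover must first be disjointified into a measurable partition, as the paper does with its partition function.
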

    \begin{proof}
        Recall that the following inequality holds in general (see \cite{LinTsu18}): 
        \begin{align*}
            \mdim_M(\mathcal{X}^{\mathbb{N}_0}, \sigma, d) \geq \sup_{\mu\in \mathcal{M}_\sigma(\mathcal{X}^{\mathbb{N}_0})} {\rm rdim}(\mathcal{X}^{\mathbb{N}_0},\sigma,d,\mu), 
        \end{align*}
        where $\mathcal{M}_\sigma(\mathcal{X}^{\mathbb{N}_0})$ is the set of all $\sigma$-invariant Borel probability measures on $\mathcal{X}^{\mathbb{N}_{0}}$. 
        It follows from Theorem \ref{main:Gibbs_RDD} and \eqref{eq:mean_metric_TheoremD} that 
        \begin{align*}
            \mdim_M(\mathcal{X}^{\mathbb{N}_0}, \sigma, d) 
            = \sup_{\mu\in \mathcal{M}_\sigma(\mathcal{X}^{\mathbb{N}_0})} {\rm rdim}(\mathcal{X}^{\mathbb{N}_0},\sigma,d,\mu)
            = {\rm rdim}(\mathcal{X}^{\mathbb{N}_0},\sigma,d,\mu_{\varphi}).  
        \end{align*}
        Thus, we have proved our corollary. 
    \end{proof}
    \begin{remark}
        Corollary \ref{maincor:maximal_entropy_of_measures} shows a new phenomenon that does not occur in the classical setting: the uniqueness of the measure (the product of the uniform measure) which attains the supremum of the measure of maximal entropy for the full shift.  
        Indeed, as in the classical setting, if the potentials $\varphi$ and $\tilde{\varphi}$ on $\mathcal{X}^{\mathbb{N}_{0}}$ are not cohomologous, then the Gibbs measures $\mu_{\varphi}$ and $\mu_{\tilde{\varphi}}$ on $\mathcal{X}^{\mathbb{N}_{0}}$ are different (see Proposition \ref{prop:not_chomologous_different_Gibbs}). 
        On the other hand, Corollary \ref{maincor:maximal_entropy_of_measures} claims that for each H\"older potential $\varphi$ depending on the first two coordinates, the Gibbs measure $\mu_{\varphi}$ attains the supremum of the extension of the measure of maximal entropy: the non-uniqueness of the supremum of the extension of the measure of maximal entropy for the full shift. 
    \end{remark}

    %\textcolor{red}{Shinoda revised from here:}
    Combining Theorem \ref{main:Gibbs_RDD} with a zero-temperature argument for Gibbs measures, we also obtain the variational principle for the mean metric dimension with potential (for the definition of the mean metric dimension $\mdim_M$, see Subsection \ref{subsec:mean_dimensions}):
    \begin{corollary}
    \label{cor:mean_metric_RDD}
        Under the same setting and assumption on Theorem \ref{main:Gibbs_RDD}, we have
        \begin{align}
            \overline{\mdim_M}(\mathcal{X}^{\mathbb{N}_0}, \sigma, d, \varphi)
            &=\sup_{\mu\in \mathcal{M}_\sigma(\mathcal{X}^{\mathbb{N}_0})}\left(\underline{\rdim}(\mathcal{X}^{\mathbb{N}_0}, \sigma, d, \mu)+\int \varphi d\mu\right)\nonumber\\
            &=\sup_{\mu\in \mathcal{M}_\sigma(\mathcal{X}^{\mathbb{N}_0})}\left(\overline{\rdim}(\mathcal{X}^{\mathbb{N}_0}, \sigma, d, \mu)+\int \varphi d\mu\right)
            \label{eq:mean_metric_RDD_potential}
        \end{align}
        where $ \overline{\mdim_M}(\mathcal{X}^{\mathbb{N}_0}, \sigma, d, \varphi)$ is the mean metric dimension with potential.
    \end{corollary}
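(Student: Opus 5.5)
The plan is to sandwich $\overline{\mdim_M}(\mathcal{X}^{\mathbb{N}_0},\sigma,d,\varphi)$ between the two suprema in \eqref{eq:mean_metric_RDD_potential} by chaining three ingredients: a general inequality, an explicit upper bound, and a lower bound coming from Gibbs measures at low temperature.

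\textbf{The chain of inequalities.} First I invoke Tsukamoto's general inequality from \cite{Tsu20} (the potential version of the inequality of \cite{LinTsu18} used in the proof of Corollary \ref{maincor:maximal_entropy_of_measures}). It states that for every $\mu\in\mathcal{M}_\sigma(\mathcal{X}^{\mathbb{N}_0})$,
\begin{align*}
\underline{\rdim}(\mathcal{X}^{\mathbb{N}_0},\sigma,d,\mu)+\int\varphi\,d\mu
\le \overline{\rdim}(\mathcal{X}^{\mathbb{N}_0},\sigma,d,\mu)+\int\varphi\,d\mu
\le \overline{\mdim_M}(\mathcal{X}^{\mathbb{N}_0},\sigma,d,\varphi).
\end{align*}
Second, I need the upper bound
\begin{align*}
\overline{\mdim_M}(\mathcal{X}^{\mathbb{N}_0},\sigma,d,\varphi)\le s+\sup_{\mu\in\mathcal{M}_\sigma}\int\varphi\,d\mu .
\end{align*}
I would take this from Theorem D of \cite{CPV24}, now applied with the potential $\varphi$ rather than $0$; its hypothesis holds because every nonempty subset of $\mathcal{X}$ has box dimension $s$. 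Failing that, a direct covering argument works: cover $\mathcal{X}^{\mathbb{N}_0}$ in the Bowen metric $d_N$ by products of $\varepsilon$-balls in the first $N+O(\log 1/\varepsilon)$ coordinates, and note that $S_N\varphi\le N\sup_\mu\int\varphi\,d\mu+o(N)$ uniformly, by the standard ergodic-optimization estimate. Third, I need the reverse inequality
\begin{align*}
\sup_{\mu\in\mathcal{M}_\sigma}\Bigl(\underline{\rdim}(\mathcal{X}^{\mathbb{N}_0},\sigma,d,\mu)+\int\varphi\,d\mu\Bigr)\ge s+\sup_{\mu\in\mathcal{M}_\sigma}\int\varphi\,d\mu .
\end{align*}
Combining the three closes the chain and gives \eqref{eq:mean_metric_RDD_potential}.

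\textbf{Lower bound via zero temperature.} For each $\beta>0$ the function $\beta\varphi$ is again H\"older and depends on the first two coordinates. So Theorem \ref{main:Gibbs_RDD} gives $\rdim(\mathcal{X}^{\mathbb{N}_0},\sigma,d,\mu_{\beta\varphi})=s$, and in particular the lower and upper rate distortion dimensions of $\mu_{\beta\varphi}$ both equal $s$. It therefore suffices to prove
\begin{align*}
\lim_{\beta\to\infty}\int\varphi\,d\mu_{\beta\varphi}=\sup_{\mu\in\mathcal{M}_\sigma}\int\varphi\,d\mu=:m(\varphi).
\end{align*}
For this I use the pressure $P(\beta)=\log\lambda_{\beta\varphi}$, where $\lambda_{\beta\varphi}$ is the leading eigenvalue of the Ruelle operator of Subsection \ref{sebsec:Gibbs_meas}. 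Equivalently,
\begin{align*}
P(\beta)=\lim_n\frac1n\log\int e^{\beta S_n\varphi}\,d\rho^{\otimes n}.
\end{align*}
From this formula $P$ is convex in $\beta$. By the usual perturbation theory of the simple isolated eigenvalue it is differentiable, with $P'(\beta)=\int\varphi\,d\mu_{\beta\varphi}$. By convexity, $P'(\beta)$ increases to $\lim_{\beta\to\infty}P(\beta)/\beta$, so it remains to identify this slope with $m(\varphi)$.

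\textbf{Identifying the slope.} The upper bound $P(\beta)/\beta\le m(\varphi)+o(1)$ follows from $S_n\varphi\le n\,m(\varphi)+o(n)$ uniformly. For the lower bound, fix $\varepsilon>0$ and a point $\underline{x}$ with $\frac1nS_n\varphi(\underline{x})\ge m(\varphi)-\varepsilon$. By uniform continuity of $\varphi$ there is $r>0$ such that $S_n\varphi\ge S_n\varphi(\underline{x})-n\varepsilon$ on the product of the balls $B(x_i,r)$, $i\le n$. Ahlfors regularity gives each such ball $\rho$-mass at least $c r^s$. Hence
\begin{align*}
P(\beta)\ge\beta\bigl(m(\varphi)-2\varepsilon\bigr)+\log(c r^s).
\end{align*}
Dividing by $\beta$ and letting $\beta\to\infty$, then $\varepsilon\to0$, gives the slope.

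\textbf{Main obstacle.} I expect the delicate step to be justifying differentiability of the pressure together with the identity $P'(\beta)=\int\varphi\,d\mu_{\beta\varphi}$ in this non-finite-alphabet Ruelle operator setting. As a fallback I would avoid derivatives: take any weak-$*$ limit $\mu_\infty$ of $\mu_{\beta\varphi}$ and show $\int\varphi\,d\mu_\infty=m(\varphi)$ directly from the Gibbs property $\mu_{\beta\varphi}([\cdot])\asymp e^{\beta S_n\varphi-nP(\beta)}\rho^{\otimes n}$. The Ahlfors lower bound for $\rho$-balls is what guarantees that near-maximizing orbits carry enough weight.
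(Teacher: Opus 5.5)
Your proof is correct, and its skeleton is the paper's: Tsukamoto's inequality $\overline{\rdim}(\mu)+\int\varphi\,d\mu\le\overline{\mdim_M}(\mathcal{X}^{\mathbb{N}_0},\sigma,d,\varphi)$, Theorem D of \cite{CPV24} for the value $s+m(\varphi)$, and Theorem \ref{main:Gibbs_RDD} applied to the Gibbs measures of scaled potentials. The genuine difference is the zero-temperature step.

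The paper treats that step as a black box. It cites Theorem 5 of \cite{LMMS2015}: every weak-$*$ accumulation point of $\mu_{t\varphi}$ as $t\to\infty$ is $\varphi$-maximizing. It then passes to a subsequence $t_i$ with $\int\varphi\,d\mu_{t_i\varphi}\to m(\varphi)$, which is essentially your fallback.

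Your main route proves the step from scratch via $P(t)=\log\lambda_{t\varphi}$:
\begin{itemize}
\item Convexity follows from H\"older's inequality.
\item Analyticity follows from Kato perturbation of the simple isolated eigenvalue of Theorem \ref{thm:KR}, since $t\mapsto\mathcal{L}_{t\varphi}$ is norm-analytic on $C(\mathcal{X})$. So the step you flagged as the main obstacle is unproblematic here.
\item Pairing with the left eigenvector $\overline{\psi}\,d\rho$ gives $P'(t)=\frac{1}{\pi\lambda}\iint\varphi(x_0,x_1)\psi(x_0)e^{t\varphi(x_0,x_1)}\overline{\psi}(x_1)\,d\rho(x_0)\,d\rho(x_1)$, where $\psi,\overline{\psi},\pi,\lambda$ are those of $t\varphi$. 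This is indeed $\int\varphi\,d\mu_{t\varphi}$, because $\psi(x_0)e^{t\varphi(x_0,x_1)}\overline{\psi}(x_1)/(\pi\lambda)$ is exactly the density $\theta(x_0)K(x_0,x_1)$ of the law of $(X_0,X_1)$.
\item The slope identification uses only $\sup S_n\varphi\le n\,m(\varphi)+o(n)$ and a uniform lower bound on $\rho$-masses of $r$-balls. Full support of $\rho$ would already suffice for the latter.
\end{itemize}

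Your route is self-contained and gives a stronger conclusion: $\int\varphi\,d\mu_{t\varphi}$ increases monotonically to $m(\varphi)$ along the whole family, not just along a subsequence. The cost is an appeal to analytic perturbation theory. The paper's citation costs one line and yields only the subsequential statement, which is all that is needed.

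Three small fixes:
\begin{itemize}
\item $S_n\varphi$ depends on $n+1$ coordinates, so integrate against $\rho^{\otimes(n+1)}$.
\item The near-maximizing point must be chosen for each $n$. A point $\underline{x}^{(n)}$ with $S_n\varphi(\underline{x}^{(n)})\ge n\,m(\varphi)$ always exists, since $\int S_n\varphi\,d\mu=n\,m(\varphi)$ for a maximizing $\mu$. The radius $r$ is independent of $n$ by uniform continuity.
\item The Ahlfors lower bound is $c^{-1}r^s$, not $cr^s$.
\end{itemize}
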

    \begin{proof}
        By Corollary 1.7 in \cite{Tsu20}, we have
        \footnote{To apply Corollary 1.7, we need to verify that the metric $d$ satisfies tame growth of covering numbers in Definition 4.1 in \cite{Tsu20}. See also Example 3.9 in \cite{LinTsu19}}
        \begin{align*}
            \overline{\mdim_M}(\mathcal{X}^{\mathbb{N}_0}, \sigma, d, \varphi)
            \geq \sup_{\mu\in \mathcal{M}_\sigma(\mathcal{X}^{\mathbb{N}_0})}\left(\overline{\rdim}(\mathcal{X}^{\mathbb{N}_0}, \sigma, d, \mu)+\int \varphi d\mu\right).
        \end{align*}
       It remains to show the reverse inequality.
        By Theorem D in \cite{CPV24} we have
        \begin{align}
            \overline{\mdim_M}(\mathcal{X}^{\mathbb{N}_0}, \sigma, d, \varphi)=\dim_B\mathcal{X}+\beta (\varphi)=s+\beta(\varphi) 
            \label{eq:mean_metric_explicit}
        \end{align}
        where $\displaystyle \beta(\varphi)=\sup_{\mu\in \mathcal{M}_\sigma(\mathcal{X}^{\mathbb{N}_0})} \int \varphi d\mu$.
         \footnote{In \cite{CPV24}, the supremum is taken over ergodic measures. 
Our definition is equivalent by a standard fact in ergodic optimization, 
where $\beta(\varphi)$ is called the maximum ergodic average.}
        By Theorem \ref{main:Gibbs_RDD}, for every $t>0$ we have
        \begin{align*}
            \rdim(\mathcal{X}^{\mathbb{N}_0}, \sigma, d, \mu_{t\varphi})+\int \varphi d\mu_{t\varphi}=s+\int \varphi d\mu_{t\varphi}
        \end{align*}
        where $\mu_{t\varphi}$ is the Gibbs measure of $t\varphi$.
        By Theorem 5 in \cite{LMMS2015} any accumulation point of $\{\mu_{t\varphi}\}$ is a maximizing measure of $\varphi$.
        Hence there exists a convergent subsequence $\{\mu_{t_i\varphi}\}$ of Gibbs measures such that 
        \begin{align*}
            \lim_{i\to\infty}\int \varphi d\mu_{t_i}=\beta(\varphi).
        \end{align*}
        For $\varepsilon>0$ and $i$ sufficiently large, we have
        \begin{align*}
            \overline{\mdim_M}(\mathcal{X}^{\mathbb{N}_0}, \sigma, d, \varphi)
            =s+\beta(\varphi)
            &<\rdim(\mathcal{X}^{\mathbb{N}_0}, \sigma, d, \mu_{t_i\varphi})+\int \varphi d\mu_{t_i\varphi}+\varepsilon\\
            &\leq\sup_{\mu\in \mathcal{M}_\sigma(\mathcal{X}^{\mathbb{N}_0})}\left(\underline{\rdim}(\mathcal{X}^{\mathbb{N}_0}, \sigma, d, \mu)+\int \varphi d\mu\right)+\varepsilon,
        \end{align*}
        which completes the proof.
    \end{proof}

    \begin{remark}
    In \cite{Tsu20}, the case $\mathcal{X}=[0,1]$ is computed as an example; equation \eqref{eq:mean_metric_RDD_potential} in Corollary \ref{cor:mean_metric_RDD} can be regarded as a generalization of this example to Ahlfors $s$-regular spaces $\mathcal{X}$.
    Moreover, such variational principles are typically established indirectly, via the coincidence of the mean metric dimension with potential and the mean Hausdorff dimension with potential.
    In contrast, our proof of Corollary \ref{cor:mean_metric_RDD} is direct, obtained through the Gibbs measures $\mu_{t\varphi}$ and their zero-temperature limit — a strategy that is natural from the point of view of thermodynamic formalism.
    \end{remark}

    Finally, we also give an implication for the mean dimension with potential.
    Let $\mdim(\mathcal{X}^{\mathbb{N}_0},\sigma,\varphi)$ be the mean dimension of $\mathcal{X}^{\mathbb{N}_{0}}$ with respect to $\sigma$ and potential $\varphi$ (for details, see Subsection \ref{subsec:mean_dimensions}). 
    Tsukamoto \cite{Tsu20} introduced the mean dimension with potential and established the following double variational principle for a continuous map $T:\mathcal{Y}\rightarrow \mathcal{Y}$ on a compact metric space $\mathcal{Y}$ with the marker property and a continuous function $\varphi:\mathcal{Y}\rightarrow \mathbb{R}$, 
    \begin{align*}
        \mdim(\mathcal{Y}, T, \varphi)&=\min_{d\in \mathscr{D}(\mathcal{Y})} \sup_{\mu\in \mathcal{M}_T(\mathcal{Y})}\left(\overline{\rdim}(\mathcal{Y}, T, d, \mu)+\int \varphi d\mu\right)\\
        &=\min_{d\in \mathscr{D}(\mathcal{Y})} \sup_{\mu\in \mathcal{M}_T(\mathcal{Y})}\left(\underline{\rdim}(\mathcal{Y}, T, d, \mu)+\int \varphi d\mu\right)
    \end{align*}
    where $\mathscr{D}(\mathcal{Y})$ denotes the set of metrics on $\mathcal{Y}$ compatible with the topology. 
    Now, we present the following second theorem, which  holds only for the special case $\mathcal{X} = [0,1]^D$  but requires only continuity of the potential, 
    without any additional regularity assumption.
%    Note that the essential part of our argument does not depend on the dimension of the underlying cube. 
    \begin{maintheorem}
    \label{main:mdim_potential}
        For a continuous function $\varphi:([0,1]^D)^{\mathbb{N}_0}\rightarrow \mathbb{R}$ we have 
        \begin{align}
            \mdim(([0,1]^D)^{\mathbb{N}_{0}}), \sigma, \varphi) 
            = D + \sup_{\mu\in \mathcal{M}_\sigma(([0,1]^D)^{\mathbb{N}_0})}\int \varphi d\mu.
            \label{eq:mdim_potential}
        \end{align}
        %where $\mathcal{M}_\sigma(([0,1]^D)^{\mathbb{N}_0})$ is the space of $\sigma$-invariant Borel probability measures.
    \end{maintheorem}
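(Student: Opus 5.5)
The plan is to prove the two inequalities in \eqref{eq:mdim_potential} separately, using Tsukamoto's double variational principle recalled above for the upper bound and a metric-independent lower bound for the other direction. The shift on $([0,1]^D)^{\mathbb{N}_0}$ is not invertible, so we first confirm that the double variational principle of \cite{Tsu20} applies. If the version available requires the marker property only for homeomorphisms, we pass to the natural extension $([0,1]^D)^{\mathbb{Z}}$. The two-sided shift on a cube has the marker property, and we expect $\mdim$ with potential (pulled back) and the suprema of $\int\varphi\,d\mu$ to agree for the one-sided system and its extension. Alternatively, we work directly with the definition of $\mdim(\cdot,\cdot,\varphi)$ via $\varepsilon$-embeddings.

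\textbf{Upper bound.} By the double variational principle it suffices to exhibit one metric $d$ for which
\[
\sup_\mu\Bigl(\overline{\rdim}(\mathcal{X}^{\mathbb{N}_0},\sigma,d,\mu)+\int\varphi\,d\mu\Bigr)\le D+\sup_\mu\int\varphi\,d\mu .
\]
Take the product metric $d$ from the introduction, with $\mathcal{X}=[0,1]^D$; this space is Ahlfors $D$-regular. We invoke the general inequality $\overline{\rdim}(\cdot,d,\mu)\le \overline{\mdim_M}(\cdot,d)$ from \cite{LinTsu18}, which here is at most $D$ by \eqref{eq:mean_metric_TheoremD}, or its upper version from \cite{CPV24}. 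Then $\overline{\rdim}+\int\varphi\,d\mu\le D+\sup_\mu\int\varphi\,d\mu$ for every $\mu$. This step needs no regularity of $\varphi$.

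\textbf{Lower bound.} We must show $\mdim(\mathcal{X}^{\mathbb{N}_0},\sigma,\varphi)\ge D+\beta(\varphi)$, where $\beta(\varphi)=\sup_\mu\int\varphi\,d\mu$. Since mean dimension with potential is a topological invariant, every compatible metric gives at least this lower bound. We will prove it directly in the style of the Lindenstrauss--Weiss lower bound for cubical shifts. Fix $\varepsilon>0$ and an ergodic maximizing measure $\mu$ with $\int\varphi\,d\mu=\beta(\varphi)$, and choose a point $\underline{x}$ generic for $\mu$. For large $N$ the orbit segment satisfies $S_N\varphi(\underline{x})\ge N(\beta-\varepsilon)$. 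By uniform continuity of $\varphi$, we then build an embedded cube of dimension roughly $ND$ in the Bowen ball, obtained by freely varying the first $N$ coordinates in small cubes $\prod_{n<N}(x_n+[0,\delta]^D)$ with the tail fixed. On this cube $S_N\varphi\ge N(\beta-2\varepsilon)$, since the perturbations are $\delta$-small in $d$. The standard Lebesgue covering (cube) argument shows that any $\varepsilon$-embedding for $d_N$ into a simplicial complex must contain, for every point of the cube, a simplex of dimension at least $ND$ in the relevant sense. The weighted width $\Wdim_\varepsilon(\cdot,d_N,S_N\varphi)$ is therefore at least about $ND+N(\beta-2\varepsilon)$. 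Dividing by $N$, letting $N\to\infty$ and then $\varepsilon\to0$ gives the lower bound.

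The main obstacle is this lower bound. We must make the cube argument compatible with the potential-weighted widim, which takes a minimum over the complex of $\sup_{x}(\dim_{f(x)}+S_N\varphi(x))$ and so requires a single point where both the local dimension and $S_N\varphi$ are large. We also need the scale-$\delta$ cubes to survive $\varepsilon$-embeddings uniformly in $N$. To handle this we first use the Lindenstrauss--Weiss lemma: any $\varepsilon$-embedding of $[0,\delta]^{DN}$ with the sup-type metric, for $\varepsilon<\delta/2$, hits a point of local dimension at least $DN$. On our cube every point has large $S_N\varphi$, so that point suffices.
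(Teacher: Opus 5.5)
Your lower bound is essentially the paper's argument. Both take an orbit segment with large Birkhoff sum, free the first $N$ coordinates with the tail fixed, and use the Lebesgue-type fact that every $\eta$-embedding of a sup-metric cube of side larger than $\eta$ hits a point $y$ with $\dim_{f(y)}P$ at least the dimension of the cube. The paper picks its base point from Jenkinson's formula $\beta(\varphi)=\sup_{y}\limsup_n\frac1n S_n\varphi(y)$ and quotes \cite[Lemma 2.2]{Tsu25}. Your generic point for an ergodic maximizing measure works equally well. Keep the continuity error and the embedding scale as separate parameters: as written, ``$\varepsilon<\delta/2$'' with $\delta=\delta(\varepsilon)$ is circular. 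Separating them costs nothing, because $\Wdim_\eta$ increases as $\eta\downarrow 0$.

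The upper bound, however, rests on a false claim: the two-sided shift on $([0,1]^D)^{\mathbb{Z}}$ does \emph{not} have the marker property. A system with the marker property has no periodic points. Indeed, if $T^px=x$ and $U$ is a marker set for some $N\geq p$, then $T^nx\in U$ for some $n$, and so $T^nx\in U\cap T^{-p}U=\emptyset$. Constant sequences are fixed points of both the one-sided and the two-sided shift. So the hypothesis of Tsukamoto's double variational principle fails for the system and for its natural extension alike. The natural-extension detour, whose compatibility you only ``expected'', is therefore moot. The paper in fact presents Theorem \ref{main:mdim_potential} as evidence that the formula survives \emph{without} the marker property.

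The step can probably be rescued. Your reduction ``it suffices to exhibit one metric'' uses only $\mdim(\mathcal{Y},T,\varphi)\le\sup_\mu(\underline{\rdim}(\mathcal{Y},T,d,\mu)+\int\varphi\,d\mu)$ for a fixed metric $d$. As far as I recall, in \cite{Tsu20} this half comes from two ingredients: bounding mean dimension with potential by mean Hausdorff dimension with potential, and the dynamical Frostman lemma for metrics with tame growth of covering numbers, which the product metric has. Markers are used there only to produce a metric attaining the minimum. You would need to confirm that this half really avoids the marker property, and also check it for a non-invertible map.

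The paper avoids all of this with a direct construction. Take $2^{-k}<\varepsilon$. The coordinate map $\underline{x}\mapsto(x_0,\ldots,x_{N+k})\in([0,1]^D)^{N+k+1}$ is a $2^{-k}$-embedding for $d_N$. This is because two points agreeing in these coordinates satisfy $d(\sigma^j\underline{x},\sigma^j\underline{y})\le 2^{-k-1}$ for $0\le j<N$. Hence $\Wdim_{2^{-k}}(\cdot,d_N,S_N\varphi)\le(N+k+1)D+\max S_N\varphi$. By subadditivity, $\frac1N\max S_N\varphi\to\beta(\varphi)$, which gives $\mdim\le D+\beta(\varphi)$. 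No rate distortion theory, Ahlfors regularity, or invertibility is involved.
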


    \begin{remark}
        For a H\"older continuous function $\varphi$ depending only on the first two coordinates, as required by Corollary \ref{cor:mean_metric_RDD}, equations \eqref{eq:mean_metric_explicit}, \eqref{eq:mdim_potential} and Corollary \ref{cor:mean_metric_RDD} together yield
        \begin{align*}
             \mdim(([0,1]^D))^{\mathbb{N}_{0}}, \sigma, \varphi)
             &=\sup_{\mu\in \mathcal{M}_\sigma(\mathcal{X}^{\mathbb{N}_0})}\left(\underline{\rdim}(\mathcal{X}^{\mathbb{N}_0}, \sigma, d, \mu)+\int \varphi d\mu\right)\nonumber\\
            &=\sup_{\mu\in \mathcal{M}_\sigma(\mathcal{X}^{\mathbb{N}_0})}\left(\overline{\rdim}(\mathcal{X}^{\mathbb{N}_0}, \sigma, d, \mu)+\int \varphi d\mu\right).
        \end{align*}
        This provides further evidence that the double variational principle for mean dimension with potential holds beyond the case where the marker property is assumed, extending Example 1.3 in \cite{Tsu20}.
    \end{remark}

    \begin{remark}
    Taking $\varphi=0$ in Theorem \ref{main:mdim_potential} recovers $\mdim(([0,1]^D)^{\mathbb{N}_{0}}, \sigma)=D$, consistent with the computation of the mean dimension of the torus $\mathbb{T}^D=(\mathbb{R}/\mathbb{Z})^D$ in \cite[Section 6]{LinTsu19}; in this sense, Theorem \ref{main:mdim_potential} can also be viewed as an extension of that computation to the case with potential.
\end{remark}

    The rest of the paper is organized as follows: 
    In Section 2, we summarize the notions used in our paper: Ahlfors regurality, Mean dimensions, Gibbs measures, the Rate distortion dimension. 
    In Section 3, We give the proof of main theorems. 
    We first presents the proof of Theorem \ref{main:mdim_potential} in Subsection \ref{subsec:main_thm_mean_dimension}. 
    We next presents the proof of Theorem \ref{main:mean_Hausdorff_Minkowski} in Subsection \ref{subsec:main_thm_mean_hausdorff_metric}. 
    We finally presents the proof of Theorem \ref{main:Gibbs_RDD} in Subsection \ref{subsec:main_thm_rate_distort_dimension}. 
\section{Settings}

\subsection{Ahlfors regularity and related dimensions} \label{sebsec:Ahlfors_reg_and_rel_dim}

Let $s\geq 0$. 
A metric space $\mathcal{X}$ is {\it Ahlfors} $s${\it -regular} if there exist a Borel measure $\rho$ supported on $\mathcal{X}$ and a constant $c\geq 1$ such that for every $x\in \mathcal{X}$ and $0<r\leq\diam \mathcal{X}$ we have
    \begin{align}
        c^{-1}r^s \leq \rho(\overline{B}(x,r))\leq cr^s
        \label{eq:Ahlfors_regular}
    \end{align}
    where $\overline{B}(x,r)$ is the closed ball centered at $x$ with radius $r$ and $\diam \mathcal{X}$ is the diameter of $\mathcal{X}$.
    %\textcolor{red}{
    Note that \eqref{eq:Ahlfors_regular} implies that $\rho$ is finite and \eqref{eq:Ahlfors_regular} is preserved under multiplication of $\rho$ by a positive constant. 
    Hence, without loss of generality, we may assume that $\rho$ is a probability measure.
    %}
    Moreover, if $\mathcal{X}$ is Ahlfors $s$-regular, its Assouad, Hausdorff and packing dimensions all coincide and are equal to $s$ (see \cite{Fr14}).

Let $(\mathcal{X},d_\mathcal{X})$ be an Ahlfors $s$-regular compact metric space and let $\rho$ be a Borel probability measure satisfying \eqref{eq:Ahlfors_regular}.
It is easy to see that $\rho$ satisfies
\begin{align}
    \rho(\overline{B}(x,kr))\leq c k^s \rho(\overline{B}(x,r))
    \label{eq:homogeneous_upper_measure}
\end{align}
 for every $x\in \mathcal{X}, 0<r\leq \diam \mathcal{X}$ and $k\geq 1$ where $c>0$ is a new constant which does not depend on $x,r$ and $k$.
A Borel measure satisfying \eqref{eq:homogeneous_upper_measure} is called a $(c,s)${\it -homogeneous measure} in \cite{VK88}. % and \textcolor{red}{$(c,s)$-homogeneous} measure. is also {\it doubling}. 
In \cite{VK88} it is briefly illustrated that the existence of $(c,s)$-homogeneous measure gives the upper bound of the cardinality of separating sets. For $\varepsilon>0$ and a set  $F\subset \mathcal{X}$, a subset $E\subset F$ is a $\varepsilon${\it -separating set} if $d_\mathcal{X}(x,y)\geq \varepsilon$ whenever $x,y\in E$ and $x\neq y$.
Then \eqref{eq:homogeneous_upper_measure} implies that there exists a constant $C>0$ such that for every $x\in \mathcal{X}$, $r>0$ and $k\geq 1$, 
\begin{align}
    \#_{{\rm sep}}(\overline{B}(x,kr), d_\mathcal{X},\varepsilon)\leq Ck^s
    \label{eq:homogeneous_upper_space}
\end{align}
where $\#_{{\rm sep}}(F, d_\mathcal{X},\varepsilon)$ is the maximum cardinality of $\varepsilon$-separating set of $F$.
A bounded metric space satisfying \eqref{eq:homogeneous_upper_space} is called $(C,s)${\it -homogeneous} in \cite{VK88}.% and such space is also doubling.

For an Ahlfors $s$-regular space, the opposite analogies of \eqref{eq:homogeneous_upper_measure} and \eqref{eq:homogeneous_upper_space} hold:
the Borel probability measure $\rho$ satisfies
\begin{align}
    \rho(\overline{B}(x,kr))\geq c^{-2}k^s \rho(\overline{B}(x,r)).
    \label{eq:homogeneous_lower_measure}
\end{align}
It is established in \cite[Proposition 5]{BG2000}
\footnote{The result is proved in the more general setting of pseudo-metric spaces. }
that the existence of a measure satisfying \eqref{eq:homogeneous_lower_measure} implies that
there exists $C>0$ such that for $x\in \mathcal{X}$, $0<r\leq \diam \mathcal{X}$ and $k\geq 1$, 
\begin{align}
    \#_{{\rm sep}}(\overline{B}(x,kr), d_\mathcal{X},r)\geq C^{-1}k^s.
    \label{homogebeous_lower_space}
\end{align}

Now we show some lemmas needed to prove the main theorems.
\begin{lemma}
    \label{lem:upper_adj}
    Let $(\mathcal{X},d_{\mathcal{X}})$ be a $(C,s)$-homogeneous compact metric space.
    Take $\varepsilon>0$ and a maximal $\varepsilon$-separating set $\{x_1, \ldots, x_\ell\}$.
    Then 
    \begin{align}
        \max_{1\leq j\leq \ell}\#\{i\mid \overline{B}(x_i,\varepsilon)\cap \overline{B}(x_j,2\varepsilon)\}\leq C 3^s. \label{eq:upper_adj}
    \end{align}
\end{lemma}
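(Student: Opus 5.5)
The plan is to read the set in \eqref{eq:upper_adj} as the set of indices $i$ with $\overline{B}(x_i,\varepsilon)\cap \overline{B}(x_j,2\varepsilon)\neq\emptyset$. The whole argument is a packing count: all the relevant centers $x_i$ lie in one enlarged ball around $x_j$, they form an $\varepsilon$-separated subset of it, and then the $(C,s)$-homogeneity bound \eqref{eq:homogeneous_upper_space} controls their number.

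Fix $j$ and let $I_j$ be the set of indices $i$ for which the two balls intersect. For $i\in I_j$, pick a point $z$ in the intersection. The triangle inequality gives
\[
d_{\mathcal{X}}(x_i,x_j)\leq d_{\mathcal{X}}(x_i,z)+d_{\mathcal{X}}(z,x_j)\leq \varepsilon+2\varepsilon=3\varepsilon,
\]
so $x_i\in\overline{B}(x_j,3\varepsilon)$ for every $i\in I_j$. Since $\{x_1,\ldots,x_\ell\}$ is $\varepsilon$-separating, the subfamily $\{x_i\}_{i\in I_j}$ is an $\varepsilon$-separating subset of $\overline{B}(x_j,3\varepsilon)$. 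Hence
\[
\# I_j\leq \#_{\rm sep}(\overline{B}(x_j,3\varepsilon),d_{\mathcal{X}},\varepsilon).
\]

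Now apply \eqref{eq:homogeneous_upper_space} with $r=\varepsilon$ and $k=3$. This bounds the right-hand side by $C3^s$, uniformly in $j$. Taking the maximum over $j$ then gives \eqref{eq:upper_adj}.

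The only delicate point is how the definition of $(C,s)$-homogeneity in \eqref{eq:homogeneous_upper_space} is normalized. As written, the separation scale $\varepsilon$ and the radius $r$ appear independently, but the intended meaning, following \cite{VK88}, is that the separation scale equals $r$. That is exactly the case $r=\varepsilon$ used above, so I would state explicitly that the bound is applied in this form. If instead one only had the homogeneous-measure bound \eqref{eq:homogeneous_upper_measure}, I would argue directly. The balls $\overline{B}(x_i,\varepsilon/2)$ for $i\in I_j$ are disjoint (up to boundary issues, which can be avoided by using radius slightly less than $\varepsilon/2$). They are contained in $\overline{B}(x_j,4\varepsilon)$, and each has measure comparable to $\rho(\overline{B}(x_j,4\varepsilon))/(c\,8^s)$ via doubling. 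This yields a bound of the same form, possibly with a different constant. The maximality of the separating set plays no role in this count; it is only needed elsewhere, for the covering property.
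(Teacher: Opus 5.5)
Your proof is correct and is essentially the paper's own. The triangle inequality places every $x_i$ with $i\in I_j$ in $\overline{B}(x_j,3\varepsilon)$, these centers are $\varepsilon$-separated, and \eqref{eq:homogeneous_upper_space} with $r=\varepsilon$, $k=3$ gives $\#I_j\le C3^s$; your reading of the set as the indices with $\overline{B}(x_i,\varepsilon)\cap\overline{B}(x_j,2\varepsilon)\neq\emptyset$, and of the separation scale in \eqref{eq:homogeneous_upper_space} as $r$, is exactly how the paper uses them, so the measure-based fallback is not needed.
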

\begin{proof}
    For each $j\in \{1,\ldots, \ell\}$ set $I_j=\{i\mid \overline{B}(x_i,\varepsilon)\cap \overline{B}(x_j,2\varepsilon)\}$.
    For $i\in I_j$, take $u_i\in \overline{B}(x_i,\varepsilon)\cap \overline{B}(x_j,2\varepsilon)$.
    Then we have $d_{\mathcal{X}}(x_i,x_j)\leq d_{\mathcal{X}}(x_i,u_i)+d_{\mathcal{X}}(u_i,x_j)\leq 3\varepsilon$.
    Hence we have $\{x_i\mid i\in I_j\}\subset \overline{B}(x_j,3\varepsilon)$.
    Since $\{x_i\mid i\in I_j\}$ is $\varepsilon$-separating and $(\mathcal{X},d_{\mathcal{X}})$ is $(C,s)$-homogeneous, we have
    \begin{align*}
        \#\{x_i\mid i\in I_j\}=\#I_j\leq C 3^s,
    \end{align*}
    which completes the proof.
\end{proof}

\begin{lemma}
    \label{lem:upper_sep}
    Let $(\mathcal{X},d_{\mathcal{X}})$ be $(C,s)$-homogeneous and take $\varepsilon_0\in(0,1)$ and $\lambda\geq 1$.
    For each $n\geq 0$ set $\varepsilon_n=\varepsilon_0\lambda^{-n}$ and let $\ell_n$ be the maximum of cardinality of $\varepsilon_n$-separating sets.
    Then we have
    \begin{align}
        \ell_n\leq (C \lambda^s)^n\ell_0 \quad \mbox{for all}\ n\geq 0.
        \label{eq:upper_sep}
    \end{align}
\end{lemma}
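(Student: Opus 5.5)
Since $\varepsilon_n=\lambda^{-1}\varepsilon_{n-1}$, it suffices to prove the one-step bound $\ell_n\leq C\lambda^s\,\ell_{n-1}$ for every $n\geq 1$. The claim then follows by induction on $n$; the case $n=0$ is trivial.

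To prove the one-step bound, fix $n\geq 1$ and take a maximal $\varepsilon_{n-1}$-separating set $\{x_1,\ldots,x_{\ell_{n-1}}\}$ of cardinality $\ell_{n-1}$. By maximality it is an $\varepsilon_{n-1}$-net: every $x\in\mathcal{X}$ satisfies $d_{\mathcal{X}}(x,x_i)<\varepsilon_{n-1}$ for some $i$. Hence
\begin{align*}
    \mathcal{X}=\bigcup_{i=1}^{\ell_{n-1}}\overline{B}(x_i,\varepsilon_{n-1}).
\end{align*}

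Now let $E$ be any $\varepsilon_n$-separating set of $\mathcal{X}$. For each $i$, the set $E\cap\overline{B}(x_i,\varepsilon_{n-1})$ is $\varepsilon_n$-separating and lies in $\overline{B}(x_i,\lambda\varepsilon_n)$. Applying the $(C,s)$-homogeneity \eqref{eq:homogeneous_upper_space} with $r=\varepsilon_n$, $k=\lambda\geq 1$ and separation $\varepsilon_n$ gives
\begin{align*}
    \#\bigl(E\cap\overline{B}(x_i,\varepsilon_{n-1})\bigr)\leq C\lambda^s.
\end{align*}
Summing over $i$ yields $\#E\leq C\lambda^s\ell_{n-1}$. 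Taking the supremum over all such $E$ gives $\ell_n\leq C\lambda^s\ell_{n-1}$.

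The main point to check is that \eqref{eq:homogeneous_upper_space} is applied in the right form: the separation scale must equal the base radius $r$, with the ball of radius $kr$. Here $r=\varepsilon_n$ and $kr=\lambda\varepsilon_n=\varepsilon_{n-1}$, which matches exactly. If instead the constant is only available for $r$ up to $\diam\mathcal{X}$, this causes no trouble, because $\varepsilon_n\leq\varepsilon_0<1$. One would normalize $\diam\mathcal{X}\leq 1$, as is done elsewhere in the paper, or else absorb the discrepancy into $\ell_0$. Finiteness of $\ell_0$ follows from compactness.
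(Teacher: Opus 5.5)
Your proof is correct and rests on the same idea as the paper's: cover $\mathcal{X}$ by the $\ell_{n-1}$ closed balls of radius $\varepsilon_{n-1}=\lambda\varepsilon_n$ centred at a maximal $\varepsilon_{n-1}$-separating set, apply $(C,s)$-homogeneity inside each ball, and induct. Your version is tighter. The paper chooses maximal $\varepsilon_1$-separating sets $E_i$ in each ball and then deduces $\ell_1\leq \ell_0C\lambda^s$ from the resulting cover of $\mathcal{X}$ by $\varepsilon_1$-balls, which needs extra justification because one closed $\varepsilon_1$-ball can contain several $\varepsilon_1$-separated points; you avoid this by bounding the intersection of an arbitrary $\varepsilon_n$-separating set with each ball directly.
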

\begin{proof}
    Take a maximal $\varepsilon_0$-separating set $\{x^{(0)}_1,\ldots, x^{(0)}_{\ell_0}\}$.
    For each $1\leq i\leq \ell_0$, let $E_i\subset \overline{B}(x^{(0)}_i,\varepsilon_0)=\overline{B}(x^{(0)}_i,\lambda\varepsilon_{1})$ be a maximal $\varepsilon_1$-separating set.
    Then $\#E_i\leq C\lambda^s$ by homogeneity of $(\mathcal{X},d_{\mathcal{X}})$.
    Since 
    \begin{align*}
        \bigcup_{i=1}^{\ell_0}\bigcup_{x\in E_i}\overline{B}(x,\varepsilon_1)=\bigcup_{i=1}^{\ell_0}\overline{B}(x^{(0)}_i, 2\varepsilon_1)=\mathcal{X},
    \end{align*}
    we have $\ell_1\leq \ell_0 C\lambda ^s$.
   Applying the same argument inductively, we obtain $\ell_n\leq \ell_0 (C\lambda^s)^n$.
\end{proof}

For later use, we state the following lemma, which is essentially a reformulation of \eqref{eq:homogeneous_upper_measure} and \eqref{eq:homogeneous_lower_measure}.

\begin{lemma}
    \label{lem:measure_ball}
    Let $\rho$ be a probability measure on a compact metric space. 
    Take $\varepsilon_0\in (0,1)$ and $k\geq 1$.
    For each $n\geq 0$ set $\varepsilon_n=\varepsilon_0 k^{-n}$.
    If $\rho$ satisfies \eqref{eq:homogeneous_upper_measure}, we have
    \begin{align}
        \rho(\overline{B}(x,\varepsilon_n))\geq (c k^s)^{-n}\rho(\overline{B}(x,\varepsilon_0))
        \quad\mbox{for all}\ n\geq 0.
        \label{eq:upper_ball}
    \end{align}
    If $\rho$ satisfies \eqref{eq:homogeneous_lower_measure}, we have
    \begin{align}
        \rho(\overline{B}(x,\varepsilon_n))\leq  (c k^{-s})^{n}\rho(\overline{B}(x,\varepsilon_0))
        \quad\mbox{for all}\ n\geq 0.
        \label{eq:lower_ball}
    \end{align}
\end{lemma}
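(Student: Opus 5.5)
The plan is a straightforward induction on $n$: iterate the one-step inequality \eqref{eq:homogeneous_upper_measure} (respectively \eqref{eq:homogeneous_lower_measure}) with the ratio $k$ between consecutive radii $\varepsilon_{n-1}=k\varepsilon_n$. The case $n=0$ is trivial in both statements, since the factor is $1$. So the only work is the inductive step from $n-1$ to $n$.

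For \eqref{eq:upper_ball}, apply \eqref{eq:homogeneous_upper_measure} with $r=\varepsilon_n$ and the given $k\geq 1$. This is admissible because $\varepsilon_n\leq\varepsilon_0<1$; we may also normalise $\diam\mathcal{X}\geq 1$, or simply note the inequality is only used for $r\leq\diam\mathcal{X}$. We obtain
\begin{align*}
\rho(\overline{B}(x,\varepsilon_{n-1}))=\rho(\overline{B}(x,k\varepsilon_n))\leq ck^s\rho(\overline{B}(x,\varepsilon_n)),
\end{align*}
that is, $\rho(\overline{B}(x,\varepsilon_n))\geq (ck^s)^{-1}\rho(\overline{B}(x,\varepsilon_{n-1}))$. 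Combining this with the induction hypothesis $\rho(\overline{B}(x,\varepsilon_{n-1}))\geq (ck^s)^{-(n-1)}\rho(\overline{B}(x,\varepsilon_0))$ gives the claim for $n$.

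For \eqref{eq:lower_ball}, apply \eqref{eq:homogeneous_lower_measure} in the same way with $r=\varepsilon_n$:
\begin{align*}
\rho(\overline{B}(x,\varepsilon_{n-1}))\geq c^{-2}k^s\rho(\overline{B}(x,\varepsilon_n)),
\end{align*}
hence $\rho(\overline{B}(x,\varepsilon_n))\leq c^2k^{-s}\rho(\overline{B}(x,\varepsilon_{n-1}))$. Iterating $n$ times yields the bound with constant $(c^2k^{-s})^n$. Renaming $c^2$ as $c$ (the paper's convention of allowing a new constant $c$) gives the stated form $(ck^{-s})^n$.

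There is no real obstacle. The only points needing care are two bookkeeping issues. First, the admissible range of radii: $r\leq\diam\mathcal{X}$ must hold for each $\varepsilon_n$, which follows from $\varepsilon_n\leq \varepsilon_0$ once $\varepsilon_0\leq\diam\mathcal{X}$. Second, the constant in the lower estimate must be renamed consistently.
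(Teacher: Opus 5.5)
Your induction is correct and is the argument the paper intends. The paper gives no written proof, calling the lemma a reformulation of \eqref{eq:homogeneous_upper_measure} and \eqref{eq:homogeneous_lower_measure}, i.e.\ iterating them along the ratio $\varepsilon_{n-1}/\varepsilon_n=k$. Your caveat $\varepsilon_0\le\diam\mathcal{X}$ is genuinely needed for \eqref{eq:lower_ball}, since \eqref{eq:homogeneous_lower_measure} holds only when $kr\le\diam\mathcal{X}$, and renaming $c^2$ as $c$ matches the paper's loose handling of constants.
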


\subsection{Mean dimensions with potential} \label{subsec:mean_dimensions}
In this subsection, we define three notions of mean dimension: mean dimension with potential, mean metric dimension, and mean Hausdorff dimension.

Let $T:\mathcal{Y}\rightarrow\mathcal{Y}$ be a continuous map on a compact metric space $(\mathcal{Y}, d)$.
%Let $(\mathcal{Y},T)$ be a topological dynamical system with a metric $d$.
First, we define mean dimension with potential, which was introduced in \cite{Tsu20}.
Let $f: \mathcal{Y}\rightarrow \mathcal{Z}$ be a continuous map into some topological space $\mathcal{Z}$
and $\varphi:\mathcal{Y}\rightarrow\mathbb{R}$ be a continuous function.
For $\varepsilon>0$,  the map $f$ is an {\it  $\varepsilon$-embedding} if $\diam f^{-1}\{z\}<\varepsilon$ for every $z\in \mathcal{Z}$.
The {\it  $\varepsilon$-width dimension with potential} $\Wdim_\varepsilon(\mathcal{Y}, d, \varphi)$ is the infimum of 
\begin{align*}
   \max_{y\in \mathcal{Y}}(\dim_{f(y)}P+\varphi(y))
\end{align*}
where $P$ is a simplicial complex, $f: \mathcal{Y}\rightarrow P$ is an $\varepsilon$-embedding and $\dim_a P$ is the maximum of $\dim \Delta$ over all simplices $\Delta\subset P$ containing $a$.
For $N>0$,  set a metric $d_N$ and $S_N\varphi$ by
\begin{align*}
    d_N(x,y)=\max_{0\leq n\leq N-1} d(T^n x, T^n y)\ (x,y\in \mathcal{Y}),
    \quad
    S_N\varphi(x)=\sum_{n=0}^{N-1}\varphi(T^nx)\ (x\in \mathcal{Y}).
\end{align*}
Then we define the {\it  mean dimension with potential} by
\begin{align*}
\mdim(\mathcal{Y},T,\varphi)=\lim_{\varepsilon\to0}\left(\lim_{N\to\infty}\frac{\Wdim_\varepsilon(\mathcal{Y}, d_N, S_N\varphi)}{N}\right). 
\end{align*}
Note that the value of $\mdim(\mathcal{Y},T,\varphi)$ is independent of the choice of $d$.

%------
Next, we define the mean metric dimension with potential, following \cite{Tsu20}; for simplicity, we define the mean Hausdorff dimension without potential, as this suffices for our purposes.

For $\varepsilon>0$, set the $\varepsilon$-{\it covering number with potential} of $\mathcal{Y}$ with respect to the metric $d$ and the potential $\varphi$ by
\begin{align*}
    \#(\mathcal{Y},d,\varphi,\varepsilon)=\inf\left\{\sum_{i=1}^n \varepsilon^{-\sup_{U_i}\varphi}\ \middle|\ \bigcup_{i=1}^n U_i=\mathcal{Y}\ \mbox{and} \ \diam\ U_i<\varepsilon\ \mbox{for every}\ i\right\}.
\end{align*}
Then we set
\begin{align*}
    S(\mathcal{Y}, T, d, \varphi, \varepsilon)
    =\lim_{N\to\infty}\frac{1}{N}\log \#(\mathcal{Y}, d_N,\varphi, \varepsilon).
\end{align*}
The upper/lower mean metric dimension with potential is defined by
\begin{align*}
   \overline{\mdim}_M(\mathcal{Y}, T, d, \varphi)&=\varlimsup_{\varepsilon\to0}\frac{S(\mathcal{Y}, T, d, \varphi, \varepsilon)}{|\log \varepsilon|},\\
\underline{\mdim}_M(\mathcal{Y}, T, d, \varphi)&=\varliminf_{\varepsilon\to0}\frac{S(\mathcal{Y}, T, d, \varphi, \varepsilon)}{|\log \varepsilon|}.
\end{align*}
For $\varphi\equiv 0$, this recovers the mean metric dimension $\overline{\mdim}_M(\mathcal{Y},T,d)$ (resp. $\underline{\mdim}_M(\mathcal{Y},T,d)$) without potential.

%-----------
Next, we define the mean Hausdorff dimension.
For $\varepsilon>0$ and $\displaystyle t>\max_{y\in \mathcal{Y}}\varphi(y)$, set
\begin{align*}
    \mathcal{H}_\varepsilon^t(\mathcal{Y}, d, \varphi)
    =\left\{ \sum_{n=1}^\infty (\diam (U_n))^{t-\sup_{U_n}\varphi}\mid \bigcup_{n=1}^\infty U_n=\mathcal{Y}\ \mbox{and}\  \diam\ U_n<\varepsilon \ \mbox{for all}\ n\in \mathbb{N}\right\}.
\end{align*}
Define
\begin{align*}
    \dim_H(\mathcal{Y}, d, \varphi,\varepsilon)=\sup\{t\geq \max_{y\in \mathcal{Y}}\varphi(y)  \mid \mathcal{H}_\varepsilon^t(\mathcal{Y}, d, \varphi)\geq 1\}.
\end{align*}
%Note that $\displaystyle \dim_H(\mathcal{Y}, d)=\lim_{\varepsilon\to0} d_H(\mathcal{Y},d, \varepsilon )$.
% \begin{align*}
%     d_N(\underline{x},\underline{y})=\max_{0\leq i\leq N-1} d(\sigma^i \underline{x}, \sigma^i \underline{y}),
% \end{align*}
Then the mean Hausdorff dimension with potential is defined by
\begin{align*}
    \mdim_H(\mathcal{Y}, T, d,\varphi)
    =\lim_{\varepsilon\to0}\varlimsup_{N\to\infty}\frac{\dim_H(\mathcal{Y},d_N, S_N\varphi,\varepsilon)}{N}.
\end{align*}
% Remark that we can also consider the lower mean Hausdorff dimension by replacing $\displaystyle \varliminf_{n\to\infty}$ in the above definition.
For $\varphi\equiv 0$, this recovers the mean Hausdorff dimension $\mdim_H(\mathcal{Y},T,d)$.

Note that,  by Proposition 3.2 in \cite{LinTsu19} the mean Hausdorff dimension is always bounded above by the lower mean metric dimension:
\begin{align}
    \mdim_{H}(\mathcal{Y}, T, d)\leq \underline{\mdim}_M(\mathcal{Y},T, d)    
    \label{eq:mean_Hausdorff_mean_metric}
\end{align}

    Using the Ahlfors regularity of the base space $\mathcal{X}$ we have the following:
    \begin{proposition}
     \label{main:mean_Hausdorff_Minkowski}
        Let $\mathcal{X}^{\mathbb{N}_0}$ be the one-sided shift space of an Ahlfors $s$-regular \textcolor{red}{($s>0$) }compact metric space, with the metric $d$ and the left shift $\sigma$ defined as above. Then we have
        \begin{align*}
            \mdim_H(\mathcal{X}^{\mathbb{N}_0}, \sigma, d)=\mdim_M (\mathcal{X}^{\mathbb{N}_0}, \sigma, d)=s. 
        \end{align*}
    \end{proposition}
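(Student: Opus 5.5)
Since the mean Hausdorff dimension never exceeds the lower mean metric dimension by \eqref{eq:mean_Hausdorff_mean_metric}, and \eqref{eq:mean_metric_TheoremD} (Theorem D of \cite{CPV24} with $\varphi=0$) gives $\overline{\mdim}_M(\mathcal{X}^{\mathbb{N}_0},\sigma,d)=s$, it suffices to show the single lower bound $\mdim_H(\mathcal{X}^{\mathbb{N}_0},\sigma,d)\geq s$. The chain $s\le\mdim_H\le\underline{\mdim}_M\le\overline{\mdim}_M=s$ then forces all quantities to equal $s$. If one prefers not to rely on \cite{CPV24} for the upper bound, it also follows directly: cover $(\mathcal{X}^{\mathbb{N}_0},d_N)$ by products of $\varepsilon/4$-balls in the coordinates $0,\dots,N+L$, where $2^{-L}\diam\mathcal{X}<\varepsilon/4$. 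Lemma \ref{lem:upper_sep} and \eqref{eq:homogeneous_upper_space} give $\ell(\varepsilon)\lesssim \varepsilon^{-s}$ balls per coordinate, hence $\log\#\le (N+L+1)(s|\log\varepsilon|+O(1))$.

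For the lower bound I would use a mass distribution principle with the product measure $\mu=\rho^{\otimes\mathbb{N}_0}$. Fix $\varepsilon$ small and a set $U\subset\mathcal{X}^{\mathbb{N}_0}$ with $\diam_{d_N}U=r<\varepsilon$. Since $d(\sigma^n\underline{x},\sigma^n\underline{y})\ge d_{\mathcal{X}}(x_n,y_n)$, the projection of $U$ to each coordinate $n\in\{0,\dots,N-1\}$ has $d_{\mathcal{X}}$-diameter at most $r$. Therefore $U$ is contained in a product of closed balls $\overline{B}(x_n,r)$ for $n<N$, with the remaining coordinates free. By \eqref{eq:Ahlfors_regular},
\begin{align*}
\mu(U)\le \prod_{n=0}^{N-1}\rho(\overline{B}(x_n,r))\le c^N r^{sN}.
\end{align*}
Consequently, for any cover $\{U_k\}$ with $\diam U_k<\varepsilon$,
\begin{align*}
1\le\sum_k\mu(U_k)\le c^N\sum_k(\diam U_k)^{sN}.
\end{align*}
To obtain $\mathcal{H}^t_\varepsilon\ge1$ with $t=N(s-\delta)$, write
\begin{align*}
(\diam U_k)^{sN}=(\diam U_k)^{t}(\diam U_k)^{\delta N}\le (\diam U_k)^t\varepsilon^{\delta N}.
\end{align*}
This gives $\sum_k(\diam U_k)^t\ge (c\,\varepsilon^{\delta})^{-N}\ge1$ once $\varepsilon<c^{-1/\delta}$. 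Hence $\dim_H(\mathcal{X}^{\mathbb{N}_0},d_N,\varepsilon)\ge N(s-\delta)$ for such $\varepsilon$ and all $N$. Dividing by $N$, letting $N\to\infty$, then $\varepsilon\to0$ and $\delta\to0$, yields $\mdim_H\ge s$.

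The main point needing care is the lower bound: the constant $c^N$ grows exponentially in $N$. This is exactly why the $\delta$-slack must be absorbed by choosing $\varepsilon$ small depending on $\delta$, before the limit in $N$. The order of limits in the definition of $\mdim_H$ (first $N\to\infty$, then $\varepsilon\to0$) permits this. A minor check is that radius $r\le\diam\mathcal{X}$ so \eqref{eq:Ahlfors_regular} applies, which holds for small $\varepsilon$. Note that $s>0$ is not really used beyond making the statement nontrivial.
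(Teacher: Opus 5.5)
Your proposal is correct and follows the paper's proof: you get the lower bound $\mdim_H\ge s$ from the product measure $\rho^{\otimes\mathbb{N}_0}$ via the estimate $\mu(U)\le c^N(\diam U)^{sN}$, absorb $c^N$ into $\varepsilon^{\delta N}$ with $\varepsilon$ chosen depending only on $\delta$, and then sandwich using \eqref{eq:mean_Hausdorff_mean_metric} and Theorem D of \cite{CPV24}. Your optional direct upper bound is essentially the paper's Proposition \ref{prop:mean_metric}; the radius constant needs a trivial adjustment (e.g.\ use $\varepsilon/8$-balls), since coordinate $0$ carries weight $1$ in $d$.
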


   The proof is fairly standard, so we defer it to the Appendix for the sake of completeness.

   \begin{remark}
   Proposition \ref{main:mean_Hausdorff_Minkowski} gives the value $\mdim_M(\mathcal{X}^{\mathbb{N}_0}, \sigma, d)=s$; together with Theorem \ref{main:Gibbs_RDD}, this is used in the proof of Corollary \ref{maincor:maximal_entropy_of_measures}.
    However, the content of Corollary \ref{maincor:maximal_entropy_of_measures} is not this value itself, but rather the fact that the supremum $\displaystyle \sup_{\mu\in\mathcal{M}_\sigma(\mathcal{X}^{\mathbb{N}_0})}{\rm rdim}(\mathcal{X}^{\mathbb{N}_0},\sigma,d,\mu)$ is attained by specific measures, namely the Gibbs measures $\mu_\varphi$.
\end{remark}

\subsection{Gibbs measures on the full shift of compact metric spaces} \label{sebsec:Gibbs_meas}

In this section we introduce the Gibbs measure for a potential depending on the first two coordinates.
We will present several results from \cite{BCLMS2011, LMST09} to define the Gibbs measures.
Gibbs measures for more general potentials are also studied in \cite{CLS20, LW20}.

Let $\varphi:\mathcal{X}^{\mathbb{N}_0}\rightarrow\mathbb{R}$ be a H\"older function which depends on the first two coordinates.
In this case, the Ruelle operator has a simple form.
\begin{definition}
    Define $\mathcal{L}, \overline{\mathcal{L}}: C(\mathcal{X})\rightarrow C(\mathcal{X})$ by
    \begin{align*}
        \mathcal{L}\psi(x_1)&=\int e^{ \varphi(x_0, x_1)}\psi(x_0) d\rho(x_0), \\
        \overline{\mathcal{L}}\psi(x_1)&=\int e^{\varphi(x_0,x_1)}\psi(x_1)\ d\rho(x_1)
    \end{align*}
    for $\psi \in C(\mathcal{X})$.
\end{definition}

Then we have the following:
\begin{theorem}[Slight modification of Theorem 3 in \cite{LMST09} and Theorem 15 in \cite{BCLMS2011}]
\label{thm:KR}
    The operators $\mathcal{L}, \overline{\mathcal{L}}$ have the same positive maximal eigenvalue $\lambda$, which is simple and isolated.
    Each eigenfunction of $\mathcal{L}$ and $\overline{\mathcal{L}}$ associated with $\lambda$ is positive.
\end{theorem}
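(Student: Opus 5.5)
We prove Theorem \ref{thm:KR} by showing that $\mathcal{L}$ and $\overline{\mathcal{L}}$ are compact, strictly positive integral operators on $C(\mathcal{X})$ that are adjoint to each other, and then invoking the Krein--Rutman / Jentzsch theory.

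\textbf{Step 1: structure of the operators.} Set $K(x_0,x_1)=e^{\varphi(x_0,x_1)}$. This kernel is continuous (indeed H\"older) on the compact space $\mathcal{X}\times\mathcal{X}$, and it is bounded below by $m:=e^{\min\varphi}>0$. We read $\overline{\mathcal{L}}\psi(x_0)=\int K(x_0,x_1)\psi(x_1)\,d\rho(x_1)$, i.e.\ integration in the other variable. Then Fubini gives
\[
\int (\mathcal{L}\psi)\,\eta\,d\rho=\int \psi\,(\overline{\mathcal{L}}\eta)\,d\rho .
\]
So $\overline{\mathcal{L}}$ is the transpose of $\mathcal{L}$ with respect to the pairing on $L^2(\rho)$.

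\textbf{Step 2: compactness.} Uniform continuity of $K$ makes the image of the unit ball of $C(\mathcal{X})$ under either operator equicontinuous and uniformly bounded by $\|K\|_\infty$. By Arzel\`a--Ascoli, both operators are compact on $C(\mathcal{X})$.

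\textbf{Step 3: strong positivity.} Let $\psi\ge 0$ with $\psi\not\equiv 0$. Since $\rho$ has full support (by \eqref{eq:Ahlfors_regular}, every ball has positive measure), we get $\int\psi\,d\rho>0$. Hence
\[
\mathcal{L}\psi\ge m\int\psi\,d\rho>0
\]
everywhere, so $\mathcal{L}\psi$ lies in the interior of the positive cone of $C(\mathcal{X})$. The same holds for $\overline{\mathcal{L}}$.

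\textbf{Step 4: Krein--Rutman.} The strong form of the Krein--Rutman theorem applies to compact, strongly positive operators. It gives that the spectral radius $r(\mathcal{L})>0$ is an algebraically simple eigenvalue with a strictly positive eigenfunction $h$. Positivity of $r(\mathcal{L})$ follows from $\mathcal{L}1\ge m$, which gives $\mathcal{L}^n1\ge m^n$. Moreover, every other eigenvalue satisfies $|\mu|<r(\mathcal{L})$.

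\textbf{Step 5: isolation and equality of eigenvalues.} Compactness means the nonzero spectrum is discrete, so $\lambda$ is isolated. Applying Step 4 to $\overline{\mathcal{L}}$ gives $r(\overline{\mathcal{L}})$ and a positive eigenfunction $\bar h$. Pairing the two eigen-equations,
\[
r(\mathcal{L})\int h\bar h\,d\rho=\int(\mathcal{L}h)\bar h\,d\rho=\int h\,\overline{\mathcal{L}}\bar h\,d\rho=r(\overline{\mathcal{L}})\int h\bar h\,d\rho ,
\]
and $\int h\bar h\,d\rho>0$, so the two spectral radii coincide.

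\textbf{Step 6: every eigenfunction for $\lambda$ is positive.} The eigenspace for $\lambda$ is one-dimensional. Hence any eigenfunction for $\lambda$ is a scalar multiple of $h$, so it is positive after normalising the sign.

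\textbf{Main obstacle.} The delicate point is strict dominance $|\mu|<\lambda$ for the other eigenvalues, which we need for the isolation-plus-simplicity statement. We obtain it from strong positivity. Suppose $\mathcal{L}g=\mu g$ with $|\mu|=\lambda$. The usual argument applies $\mathcal{L}$ to $|g|$ and compares: the triangle inequality must be an equality wherever the kernel is strictly positive, which forces $g=c|g|$ with $|g|$ a multiple of $h$. This is where the lower bound $K\ge m>0$ and the full support of $\rho$ are used.
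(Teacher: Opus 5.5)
Your argument is correct. The paper gives no proof of its own and defers to \cite{LMST09} and \cite{BCLMS2011}; your route (Arzel\`a--Ascoli compactness, strong positivity, the strong Krein--Rutman theorem, and the $L^2(\rho)$ duality identifying the two spectral radii) is, as far as I know, the standard one those references rely on. You also correctly identify the one input the ``slight modification'' needs, namely that $\rho$ has full support by \eqref{eq:Ahlfors_regular}, and you rightly read $\overline{\mathcal{L}}$ as integration in $x_1$ evaluated at $x_0$, which repairs the misprint in the paper's definition.
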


%Let $\psi, \overline{\psi}$ be the positive eigenfunctions for $\mathcal{L}$ and $\overline{\mathcal{L}}$ associated to $\lambda$, which satisfy the normalization condition $\int \psi(\B{x})d\B{x}=\int \overline{\psi}(\B{x}) d\B{x}=1$. 
Let $\psi, \overline{\psi}$ be the positive eigenfunctions for $\mathcal{L}$ and $\overline{\mathcal{L}}$ associated with the maximal eigenvalue $\lambda$, with $\int \psi(x)d\rho(x)=\int \overline{\psi}(x) d\rho(x)=1$. 

Define a density function $\theta:\mathcal{X}\rightarrow \mathbb{R}$ by
\begin{align*}
    \theta(x) := \frac{\psi(x)\overline{\psi}(x)}{\pi} \quad (x\in \mathcal{X}),
\end{align*}
where $\pi=\int \psi(x)\overline{\psi}(x)d\rho(x)$, and a transition function $K:\mathcal{X}^2\rightarrow \mathbb{R}$ by
\begin{align*}
    K(x_0, x_1) := \frac{e^{\varphi(x_0, x_1)}\overline{\psi}(x_1)}{\overline{\psi}(x_0)\lambda} \quad (x_{0}, x_{1} \in \mathcal{X}).
\end{align*}

It is straightforward to show that $\theta$ is stationary for $K(\cdot, \cdot)$, i.e., 
\begin{align*}
    \theta(x_1)=\int \theta(x_0) K(x_0, x_1)  d\rho(x_0) d\rho(x_1),
\end{align*}
 for every $x_1\in \mathcal{X}$.
 For the initial probability measure $\theta$ and the transition $K$, one can define a stationary (canonical) Markov process $X=\{X_n\}_{n\in \mathbb{N}_{0}}$ with state space $\mathcal{X}$.
 This process determines a $\sigma$-invariant measure $\mu_\varphi$ on $\mathcal{X}^{\mathbb{N}_{0}}$ described by the process $X$.
Following \cite{BCLMS2011}, we call the measure $\mu_\varphi$ the Gibbs measure for $\varphi$.

As in the finite symbol case, potentials which are not cohomologous give different Gibbs measures.
H\"older functions (not necessarily depending on the first two coordinates) $\varphi_1$ and $\varphi_2$ are {\it cohomologous} if there exists a constant $K\in \mathbb{R}$ and a H\"older function $u$ such that $\varphi_1-\varphi_2=u-u\circ \sigma+K$ holds.

\begin{proposition}
\label{prop:not_chomologous_different_Gibbs}
    Let $\varphi_1, \varphi_2:\mathcal{X}^{\mathbb{N}_0}\rightarrow\mathbb{R}$ be H\"older potentials depending on the first two coordinates. 
    If $\varphi_1$ and $\varphi_2$ are not cohomologous, then $\mu_{\varphi_1}\neq\mu_{\varphi_2}$.
\end{proposition}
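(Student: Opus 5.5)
We argue by contraposition: assuming $\mu_{\varphi_1}=\mu_{\varphi_2}$, we show that $\varphi_1$ and $\varphi_2$ are cohomologous. For $j=1,2$, let $\lambda_j$ be the maximal eigenvalue, $\psi_j,\overline{\psi}_j$ the normalized eigenfunctions, and $\theta_j$, $K_j$ the density and transition function built from $\varphi_j$ as above.

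The first step is to read off the densities of the two-dimensional marginals. By construction of the Markov process, the marginal of $\mu_{\varphi_j}$ on the first two coordinates $(x_0,x_1)$ has density $\theta_j(x_0)K_j(x_0,x_1)$ with respect to $\rho\otimes\rho$. If $\mu_{\varphi_1}=\mu_{\varphi_2}$, these marginals coincide. Hence the densities agree $\rho\otimes\rho$-almost everywhere.

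The second step is to upgrade this to everywhere equality. Both densities are continuous: $\varphi_j$ is continuous and the eigenfunctions are positive and continuous by Theorem \ref{thm:KR}. Moreover, $\rho$ has full support, since by \eqref{eq:Ahlfors_regular} every ball has positive measure, so $\rho\otimes\rho$ also has full support on $\mathcal{X}^2$. Two continuous functions agreeing almost everywhere for a fully supported measure agree everywhere, so
\begin{align*}
\theta_1(x_0)K_1(x_0,x_1)=\theta_2(x_0)K_2(x_0,x_1)\quad\text{for all } x_0,x_1\in\mathcal{X}.
\end{align*}

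The third step is an algebraic identity. Integrating the display in $x_1$ against $\rho$ and using $\int K_j(x_0,x_1)\,d\rho(x_1)=1$ gives $\theta_1=\theta_2$. This normalization needs to be checked; it should follow from the eigen-equation for $\overline{\psi}_j$, under the convention of Theorem \ref{thm:KR} for which variable is integrated. Dividing by the positive function $\theta_1=\theta_2$ gives $K_1=K_2$. Taking logarithms and writing $\Phi_j:=\varphi_j(x_0,x_1)+\log\overline{\psi}_j(x_1)-\log\overline{\psi}_j(x_0)-\log\lambda_j$, the identity $K_1=K_2$ reads $\Phi_1=\Phi_2$. Rearranging,
\begin{align*}
\varphi_1(x_0,x_1)-\varphi_2(x_0,x_1)=u(x_0)-u(x_1)+\log\lambda_1-\log\lambda_2,
\end{align*}
where $u:=\log\overline{\psi}_1-\log\overline{\psi}_2$ is regarded as a function of the first coordinate on $\mathcal{X}^{\mathbb{N}_0}$. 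Since $u(x_1)=u\circ\sigma(\underline{x})$, this is exactly the cohomology relation with constant $K=\log\lambda_1-\log\lambda_2$.

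The main obstacle is verifying that $u$ is Hölder, which the definition of cohomologous requires. I would first show that $\overline{\psi}_j$ is Hölder on $\mathcal{X}$. From the eigen-equation, $\overline{\psi}_j(y)=\lambda_j^{-1}\int e^{\varphi_j(y,x)}\overline{\psi}_j(x)\,d\rho(x)$ (or the analogous expression with the coordinates of $\varphi_j$ swapped), so
\begin{align*}
|\overline{\psi}_j(y)-\overline{\psi}_j(y')|\le \lambda_j^{-1}\|\overline{\psi}_j\|_\infty\sup_x|e^{\varphi_j(y,x)}-e^{\varphi_j(y',x)}|.
\end{align*}
The right-hand side is at most a constant times $d_{\mathcal{X}}(y,y')^\alpha$, because $\varphi_j$ is Hölder and bounded. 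Since $\overline{\psi}_j$ is positive and continuous on the compact space $\mathcal{X}$, it is bounded away from zero, so $\log\overline{\psi}_j$ is Hölder. Finally, $d_{\mathcal{X}}(x_0,y_0)\le d(\underline{x},\underline{y})$, so $u$ is Hölder on $\mathcal{X}^{\mathbb{N}_0}$, and the proof is complete.
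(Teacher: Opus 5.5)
Your proof is correct, but it takes a different route from the paper. The paper works through periodic points in two lemmas:
\begin{itemize}
\item Lemma \ref{lem:periodic_const} evaluates the $(m+1)$-dimensional marginal density $\psi_i(y_0)e^{S_m\varphi_i}\overline{\psi_i}(y_m)/(\pi_i\lambda_i^m)$ on shrinking products of balls along a periodic orbit. This gives $S_m\varphi_1-S_m\varphi_2=m\log(\lambda_1/\lambda_2)$; the boundary term disappears after replacing $m$ by $mk$ and letting $k\to\infty$.
\item Lemma \ref{lem:cohomologous} is a Liv\v{s}ic-type theorem. It defines $u$ by Birkhoff sums on a dense forward orbit, proves $u$ is H\"older there by comparing with periodic points built from orbit segments, and extends it by uniform continuity.
\end{itemize}

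You use only the two-dimensional marginal. Continuity plus full support of $\rho\otimes\rho$ plays the role of the paper's shrinking-ball limit. The Markov structure then gives the coboundary explicitly: $K_1=K_2$, and $u=\log\overline{\psi}_1-\log\overline{\psi}_2$ depends only on $x_0$, with the same constant $\log(\lambda_1/\lambda_2)$ the paper finds.

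Your argument is shorter, avoids the dense-orbit construction, and gives more information (identical transition kernels and an explicit one-coordinate transfer function). The paper's route has the advantage that its Liv\v{s}ic lemma does not use the two-coordinate assumption; that assumption enters only through Lemma \ref{lem:periodic_const}.

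Two small remarks:
\begin{itemize}
\item The normalization $\int K_j(x_0,x_1)\,d\rho(x_1)=1$ that you flagged does hold under the intended convention $\overline{\mathcal{L}}\psi(x_0)=\int e^{\varphi(x_0,x_1)}\psi(x_1)\,d\rho(x_1)$; it is exactly what makes $K_j$ a transition kernel. In any case, $\theta_1=\theta_2$ also follows directly by comparing one-dimensional marginals.
\item H\"older continuity of $u$ does not need the eigen-equation. Fixing $x_1=a$ in your identity gives $u(x_0)=\varphi_1(x_0,a)-\varphi_2(x_0,a)-\log(\lambda_1/\lambda_2)+u(a)$.
\end{itemize}
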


The following Lemma \ref{lem:cohomologous} and Lemma \ref{lem:periodic_const} immediately imply Proposition \ref{prop:not_chomologous_different_Gibbs}.

\begin{lemma}
\label{lem:cohomologous}
    Let $\varphi_1$ and $\varphi_2$ be H\"older functions (not necessarily depending on the first two coordinates).
    If there exists a constant $K\in \mathbb{R}$ such that for every periodic point $\underline{x}\in \mathcal{X}^{\mathbb{N}_0}$ we have
    \begin{align}
        S_m\varphi_1(\underline{x})-S_m\varphi_2(\underline{x})=mK
        \label{eq:periodic_constant}
    \end{align}
    where $m$ is the period of $\underline{x}$,
    then there exists a H\"older function $u:\mathcal{X}^{\mathbb{N}_0}\rightarrow \mathbb{R}$ such that
    \begin{align*}
        \varphi_1-\varphi_2=u-u\circ \sigma+K
    \end{align*}
     holds.
\end{lemma}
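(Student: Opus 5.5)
The plan is to adapt the classical Livšic argument to the full shift $\mathcal{X}^{\mathbb{N}_0}$, with $\varphi:=\varphi_1-\varphi_2-K$. By hypothesis, $S_m\varphi(\underline{p})=0$ for every periodic point $\underline{p}$ of period $m$. The goal is a H\"older $u$ with $\varphi=u-u\circ\sigma$.

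Since the system is one-sided, I would not build $u$ along a dense forward orbit as in the invertible case. Instead I would define $u$ by a backward (inverse-branch) sum. Fix a reference point $\underline{a}\in\mathcal{X}^{\mathbb{N}_0}$. For $\underline{x}$ and $n\geq 1$, let $\underline{x}^{(n)}\in\sigma^{-n}\underline{x}$ be the preimage obtained by prepending the first $n$ coordinates of $\underline{a}$, namely $(a_0,\dots,a_{n-1},x_0,x_1,\dots)$. Then set
\[
u(\underline{x}):=\lim_{n\to\infty}\Bigl(S_n\varphi(\underline{x}^{(n)})-S_n\varphi(\underline{a})\Bigr).
\]
The orbits of $\underline{x}^{(n)}$ and $\underline{a}$ agree on the first $n$ coordinates of each shifted point up to a tail. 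Concretely, $d(\sigma^k\underline{x}^{(n)},\sigma^k\underline{a})\leq 2^{-(n-k)}\diam(\mathcal{X})\cdot 2$. H\"older continuity of $\varphi$ with exponent $\alpha$ then gives the bound
\[
|\varphi(\sigma^k\underline{x}^{(n)})-\varphi(\sigma^k\underline{a})|\lesssim 2^{-\alpha(n-k)}.
\]
This bound alone does not give convergence in $n$, since the sum runs over all $k<n$. This is where the periodic-orbit hypothesis enters: it ensures the increments from $n$ to $n+1$ are summable.

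In practice I would instead use the cleaner Bowen-type route, which is to prove the sufficient bound
\[
\sup_{\underline{x},\,n}|S_n\varphi(\underline{x})|<\infty .
\]
To get this, given $\underline{x}$ and $n$, let $\underline{p}$ be the periodic point of period $n$ repeating $(x_0,\dots,x_{n-1})$. Then $S_n\varphi(\underline{p})=0$. For $0\leq k<n$, the points $\sigma^k\underline{x}$ and $\sigma^k\underline{p}$ agree in their first $n-k$ coordinates, so $d(\sigma^k\underline{x},\sigma^k\underline{p})\leq 2\diam\mathcal{X}\,2^{-(n-k)}$. Hence
\[
|S_n\varphi(\underline{x})|\leq \sum_{k<n}|\varphi|_\alpha(2\diam\mathcal{X})^\alpha 2^{-\alpha(n-k)}\leq C.
\]

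The same comparison yields a Bowen-type H\"older estimate for Birkhoff sums. If $\underline{x},\underline{y}$ agree in their first $n$ coordinates with $d(\sigma^n\underline{x},\sigma^n\underline{y})=\delta$, then $|S_n\varphi(\underline{x})-S_n\varphi(\underline{y})|\leq C\delta^\alpha$. Using this, the cocycle defined by inverse branches is well defined. I would take
\[
u(\underline{x})=\lim_n S_n\varphi(\underline{x}^{(n)}),
\]
and show the sequence is Cauchy by comparing $\underline{x}^{(n)}$ and $\underline{x}^{(n+1)}$ through a periodic point: close the loop and use the zero periodic sums together with the exponential closeness. The coboundary identity $u-u\circ\sigma=\varphi$ follows because $\underline{x}^{(n+1)}$ and $(\sigma\underline{x})^{(n+1)}$ are related by a shift. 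The H\"older bound on $u$ follows from the Birkhoff-sum estimate, since $\underline{x}^{(n)}$ and $\underline{y}^{(n)}$ agree on the first $n$ coordinates and have shifted distance $d(\underline{x},\underline{y})$.

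The main obstacle is the convergence and H\"older regularity of the inverse-branch limit defining $u$, i.e.\ showing that $S_n\varphi(\underline{x}^{(n)})$ is Cauchy. My first attempt would handle it as follows. Write
\[
S_{n+1}\varphi(\underline{x}^{(n+1)})-S_n\varphi(\underline{x}^{(n)})
\]
as a telescoping difference. Then insert the periodic point built from the word $(a_0,\dots,a_n,x_0,\dots,x_{N})$ for large $N$; its zero Birkhoff sum lets the two long sums cancel up to exponentially small errors in $n$. If that becomes messy, the fallback is to obtain $u$ as a uniform limit of Ces\`aro averages of the bounded sequence $S_n\varphi(\underline{x}^{(n)})$. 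This gives a continuous $u$, whose H\"older regularity then follows from the uniform Birkhoff-sum H\"older bound.
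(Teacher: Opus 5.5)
There is a genuine gap in the step that proves the coboundary identity; the rest of your final construction is sound.

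Your final construction $u(\underline{x})=\lim_{n}S_n\varphi(a_0\cdots a_{n-1}\underline{x})$ is a workable alternative to the paper's. Your Bowen estimate for Birkhoff sums, the uniform bound on $S_n\varphi$, and the H\"older bound for $u$ are all correct.

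The problem is that $\underline{x}^{(n+1)}=a_0\cdots a_nx_0x_1\cdots$ and $(\sigma\underline{x})^{(n+1)}=a_0\cdots a_nx_1x_2\cdots$ are \emph{not} related by a shift. What does hold is $S_{n+1}\varphi(a_0\cdots a_{n-1}x_0\,\sigma\underline{x})=S_n\varphi(\underline{x}^{(n)})+\varphi(\underline{x})$. So you must still show that replacing the prefix $a_0\cdots a_{n-1}x_0$ by $a_0\cdots a_n$ in front of $\sigma\underline{x}$ changes $S_{n+1}\varphi$ by $o(1)$.

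That is a second, essential use of the periodic-orbit hypothesis, and it does not follow from the definition plus convergence. Take $\underline{a}=c^\infty$ and $\varphi(\underline{x})=d_{\mathcal{X}}(x_0,c)$. Your limit exists and is H\"older (it is $u\equiv 0$), yet $\varphi$ is not a coboundary.

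Two further points:
\begin{itemize}
\item Your first normalization $S_n\varphi(\underline{x}^{(n)})-S_n\varphi(\underline{a})$ does not converge in general, even under the hypothesis. If $\varphi=v\circ\sigma-v$, it behaves like $v(\underline{x})-v(\sigma^n\underline{a})$.
\item The Ces\`aro fallback does not work. A bounded sequence need not have convergent Ces\`aro means, and a limit obtained that way would still lack the identity.
\end{itemize}

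\textbf{The repair.} Both the Cauchy step and the identity follow from one lemma, and it needs two periodic points rather than the one you describe. The lemma: if words $w,w'$ of lengths $n,n'$ share a prefix of length $m$, then $|S_n\varphi(w\underline{y})-S_{n'}\varphi(w'\underline{y})|\le C2^{-\alpha m}$ for all $\underline{y}$.

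To prove it, set $\underline{p}=(wy_0\cdots y_{N-1})^\infty$ and $\underline{p}'=(w'y_0\cdots y_{N-1})^\infty$.
\begin{itemize}
\item Vanishing of full-period sums gives $S_n\varphi(\underline{p})=-S_N\varphi(\sigma^n\underline{p})$ and $S_{n'}\varphi(\underline{p}')=-S_N\varphi(\sigma^{n'}\underline{p}')$.
\item The points $\sigma^n\underline{p}$ and $\sigma^{n'}\underline{p}'$ agree on their first $N+m$ coordinates, so these tails differ by $O(2^{-\alpha m})$.
\item $S_n\varphi(w\underline{y})$ differs from $S_n\varphi(\underline{p})$ by $O(2^{-\alpha N})$, and likewise for $w'$.
\end{itemize}
Now let $N\to\infty$.

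Apply the lemma twice:
\begin{itemize}
\item The pair $(a_0\cdots a_{n-1},\,a_0\cdots a_n)$ gives summable increments, so the limit exists.
\item The pair $(a_0\cdots a_{n-1}x_0,\,a_0\cdots a_n)$ with $\underline{y}=\sigma\underline{x}$ gives $u\circ\sigma-u=\varphi$. So $-u$ is the function the statement requires.
\end{itemize}

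\textbf{Comparison with the paper.} With this repair your route is complete and genuinely different from the paper's. The paper defines $u$ only along the forward orbit of a point that visits every open set infinitely often, where the identity holds by construction. It uses the periodic hypothesis to prove H\"older continuity on that orbit, then extends by uniform continuity. Your formula defines $u$ everywhere, with the H\"older bound for free. In exchange, you must use the hypothesis both for the existence of the limit and for the identity.
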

\begin{proof}
    By Lemma 1.29 in \cite{Bow75} there exists $\underline{x}\in \mathcal{X}^{\mathbb{N}_0}$ such that for every non-empty open set $U$ and $N\in \mathbb{N}$ there exists $n\geq N$ satisfying $\sigma^n(\underline{x})\in U$.
    Let $\Gamma=\{\sigma^n(\underline{x})\mid n\in \mathbb{N}\}$.
    Then we have $\overline{\Gamma}=\mathcal{X}^{\mathbb{N}_0}$.
    Set $\eta=\varphi_1-\varphi_2-K$
    and define $u:\Gamma\rightarrow \mathbb{R}$ by
    \begin{align*}
        u(\sigma^n(\underline{x}))=\sum_{i=0}^{n-1}\eta \circ \sigma^i (\underline{x})
        \quad (n\in \mathbb{N}).
    \end{align*}
    Note that $\eta$ is a H\"older function.
    Since $\overline{\Gamma}=\mathcal{X}^{\mathbb{N}_0}$ implies $\Gamma$ is infinite,
    $\sigma^n(\underline{x})\neq \sigma^{m}(\underline{x})$ if $n\neq m$ and $u$ is well-defined.

    We will show that $u$ is H\"older on $\Gamma$.
    Take $\underline{y}=\sigma^k(\underline{x}),\underline{z}=\sigma^{m}(\underline{x})\in \Gamma$.
    Without loss of generality, we may assume $k<m$.
    Set $\delta=d(\underline{y},\underline{z})>0$.
    By the choice of $\underline{x}$, there exists $\ell\in \mathbb{N}$ such that
    \begin{align*}
        \sigma^{\ell}(\underline{x})\in B(\underline{y},\delta)\cap B(\underline{z},\delta)
        \quad \mbox{and}\quad
        2^{-(\ell-m)}<\delta.
    \end{align*}
    Let $\underline{t}:=\sigma^\ell(\underline{x})$.
    Then 
    \begin{align*}
        |u(\underline{y})-u(\underline{z})|\leq |u(\underline{y})-u(\underline{t})|+|u(\underline{t})-u(\underline{z})|.
    \end{align*}
    Set 
    \begin{align*}
        \tilde{\underline{w}}:=x_0\cdots x_{k-1}(x_k\cdots x_{\ell-1})^\infty
        =x_0\cdots x_{k-1}(y_0\cdots y_{\ell-k-1})^\infty
    \end{align*}
    and $\underline{w}=\sigma^k(\tilde{\underline{w}})$.
    Since $\underline{w}$ is periodic with period $\ell-k$, the assumption implies 
    \begin{align*}
        \sum_{i=0}^{\ell-k-1}\eta\circ \sigma^i(\underline{w})
        =S_{\ell-k}\varphi_1(\underline{w})-S_{\ell-k}\varphi_2(\underline{w})-(\ell-k)K=0.
    \end{align*}
    Then we have
    \begin{align*}
        |u(\underline{y})-u(\underline{t})|
        &=\left|\sum_{i=k}^{\ell-1}\eta\circ \sigma^i(\underline{x})\right|\\
        &=\left|\sum_{i=0}^{\ell-k-1}\eta\circ \sigma^i(\underline{y})-\sum_{i=0}^{\ell-k-1}\eta\circ \sigma^i(\underline{w})\right|\\
        &\leq \sum_{i=0}^{\ell-k-1}\left|\eta(y_i\cdots y_{\ell-k-1}\underline{t})-\eta(y_i\cdots y_{\ell-k-1}\underline{w})\right|\\
        &\leq C_\eta\sum_{i=0}^{\ell-k-1}d(y_i\cdots y_{\ell-k-1}\underline{t},y_i\cdots y_{\ell-k-1}\underline{w})^\alpha\\
        &=C_\eta\sum_{i=0}^{\ell-k-1} \frac{1}{2^{\alpha(\ell-k-i)}}d(\underline{t},\underline{w})^\alpha\\
        &\leq C_\eta d(\underline{t},\underline{w})^\alpha
     \end{align*}
     where $C_\eta>0$ is the H\"older constant of $\eta$ and $\alpha$ is the H\"older exponent of $\eta$.
     By the choice of $\ell$ and $\underline{w}$, we have
     \begin{align*}
         d(\underline{t},\underline{w})
         \leq d(\underline{t},\underline{y})+d(\underline{y},\underline{w})
         \leq d(\underline{z},\underline{y})+2^{-(\ell-k)}
         <2d(\underline{z},\underline{y}).
     \end{align*}
     Then 
     \begin{align*}
         |u(\underline{y})-u(\underline{t})|\leq 2^\alpha C_\eta d(\underline{z},\underline{y})^\alpha.
     \end{align*}

     In the same way by setting $\underline{v}=x_0\cdots x_{m-1}(x_m\cdots x_{\ell-1})^\infty=x_0\cdots x_{m-1}(z_0\cdots z_{\ell-m-1})^\infty$, we have
     \begin{align*}
         |u(\underline{t})-u(\underline{z})|\leq C_\eta d(\underline{t},\underline{v})^\alpha
         \leq C_\eta(d(\underline{z},\underline{y})+2^{-(\ell-m)})^\alpha<2^\alpha C_\eta d(\underline{z},\underline{y})^\alpha.
     \end{align*}

     Hence $u$ is uniformly continuous on $\Gamma$ and therefore it is extended uniformly to a continuous function on $\overline{\Gamma}=\mathcal{X}^{\mathbb{N}_0}$.
     Abusing notation, we denote the extension also by $u$.
     Then we have
     \begin{align*}
         u-u\circ \sigma=\varphi_1-\varphi_2-K
     \end{align*}
     and $u$ is H\"older on $\mathcal{X}^{\mathbb{N}_0}$.
\end{proof}

\begin{lemma}
\label{lem:periodic_const}
    Let $\varphi_1$ and $\varphi_2$ be H\"older functions depending on the first two coordinates and let $\mu_{\varphi_1}$ and $\mu_{\varphi_2}$ be the Gibbs measures with respect to $\varphi_1$ and $\varphi_2$ respectively.
    If $\mu_{\varphi_1}=\mu_{\varphi_2}$, then there exists a constant $K\in \mathbb{R}$ such that \eqref{eq:periodic_constant} holds for every periodic point $\underline{x}\in \mathcal{X}^{\mathbb{N}_0}$.
    In particular, $\displaystyle K=\log \frac{\lambda_1}{\lambda_2}$ where $\lambda_i$ is the same maximal eigenvalue of $\mathcal{L}_{\varphi_i}$ and $\overline{\mathcal{L}}_{\varphi_i}$ for $i=1,2$.
\end{lemma}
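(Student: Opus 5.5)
The plan is to read off the periodic-orbit sums from the explicit cylinder densities of $\mu_{\varphi_i}$. Write $\theta_i$, $K_i$, $\overline{\psi}_i$, $\lambda_i$ for the objects attached to $\varphi_i$. By construction of the Markov process, the restriction of $\mu_{\varphi_i}$ to the first $m+1$ coordinates is absolutely continuous with respect to $\rho^{\otimes(m+1)}$, with density
\begin{align*}
    p^{(i)}_{m}(x_0,\ldots,x_m)=\theta_i(x_0)\prod_{j=0}^{m-1}K_i(x_j,x_{j+1})
    =\theta_i(x_0)\frac{\overline{\psi}_i(x_m)}{\overline{\psi}_i(x_0)}\lambda_i^{-m}\exp\Big(\sum_{j=0}^{m-1}\varphi_i(x_j,x_{j+1})\Big),
\end{align*}
where the product telescopes in the $\overline{\psi}_i$ factors. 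If $\mu_{\varphi_1}=\mu_{\varphi_2}$, the two densities $p^{(1)}_m$ and $p^{(2)}_m$ agree $\rho^{\otimes(m+1)}$-almost everywhere.

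Next I would upgrade the almost-everywhere equality to equality everywhere. Both densities are continuous, since $\varphi_i$ is H\"older and $\theta_i,\overline{\psi}_i$ are continuous and positive by Theorem \ref{thm:KR}. Moreover $\rho$ has full support by \eqref{eq:Ahlfors_regular}, because every ball has positive measure. Hence $\rho^{\otimes(m+1)}$ has full support on $\mathcal{X}^{m+1}$, and continuous functions that agree almost everywhere agree everywhere.

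Now take a periodic point $\underline{x}$ of period $m$, so $x_m=x_0$. Then $\overline{\psi}_i(x_m)/\overline{\psi}_i(x_0)=1$, and since $\varphi_i$ depends on the first two coordinates, $\sum_{j=0}^{m-1}\varphi_i(x_j,x_{j+1})=S_m\varphi_i(\underline{x})$. Equality of the densities therefore gives
\begin{align*}
    \theta_1(x_0)\lambda_1^{-m}e^{S_m\varphi_1(\underline{x})}=\theta_2(x_0)\lambda_2^{-m}e^{S_m\varphi_2(\underline{x})}.
\end{align*}

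It remains to eliminate the factor $\theta_1(x_0)/\theta_2(x_0)$. For this I use the case $m=0$: the one-dimensional marginals $\theta_1\rho$ and $\theta_2\rho$ coincide, and by the same continuity and full-support argument $\theta_1=\theta_2$ everywhere. Taking logarithms then yields
\begin{align*}
    S_m\varphi_1(\underline{x})-S_m\varphi_2(\underline{x})=m\log\frac{\lambda_1}{\lambda_2},
\end{align*}
which is \eqref{eq:periodic_constant} with $K=\log(\lambda_1/\lambda_2)$.

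The only genuinely delicate step is the passage from almost-everywhere to pointwise equality, since periodic points form a $\rho^{\otimes(m+1)}$-null set. Continuity of the densities together with the full support of $\rho$ resolves it.
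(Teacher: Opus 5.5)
Your argument is correct. It rests on the same idea as the paper's proof: evaluate the explicit $(m+1)$-coordinate density $\frac{1}{\pi_i\lambda_i^m}\psi_i(x_0)e^{S_m\varphi_i}\overline{\psi}_i(x_m)$ at a periodic point. It differs from the paper in its two technical steps.

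For the passage from almost-everywhere to pointwise information, the paper bounds $\mu_{\varphi_i}(B_r)$ above and below on small products of balls around $(x_0,\ldots,x_m)$ and then lets $r\to0$. Your continuity-plus-full-support argument is the soft version of the same step; both implicitly use that balls have positive $\rho$-measure.

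The genuine difference is how the boundary factor is removed. The paper reaches
\[ S_m\varphi_1(\underline{x})-S_m\varphi_2(\underline{x})=\log\frac{\pi_1}{\pi_2}+m\log\frac{\lambda_1}{\lambda_2}-\log\frac{\psi_1(x_0)\overline{\psi_1}(x_0)}{\psi_2(x_0)\overline{\psi_2}(x_0)} \]
and kills the $m$-independent terms by regarding $\underline{x}$ as $mk$-periodic, dividing by $k$ and letting $k\to\infty$. You instead recognize those terms as $-\log(\theta_1(x_0)/\theta_2(x_0))$ and show they vanish because the one-dimensional marginals $\theta_i\rho$ coincide.

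Your route is shorter, involves no limit, and yields the extra fact $\theta_1=\theta_2$. However, it depends on knowing the boundary factor exactly. The paper's iteration trick, which is Bowen's classical device, only needs that factor to be bounded independently of the period. It would therefore still work in settings with only Gibbs-type bounds up to a multiplicative constant.
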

\begin{proof}
    Let $\underline{x}\in \mathcal{X}^{\mathbb{N}_0}$ be a periodic point of period $m$.
    For $r>0$, set
    \begin{align*}
        B_r=\prod_{i=0}^{m}B(\sigma^i(\underline{x}),r)\times \mathcal{X}^{\mathbb{N}_0}.
    \end{align*}
    Note that $B(\underline{x},r)=B(\sigma^m(\underline{x}),r)$.
    For $i=1,2$, let $\lambda_i$ be the same maximal eigenvalue and $\psi_i, \overline{\psi}_i$ be the eigenfunctions obtained by Theorem \ref{thm:KR} for $\mathcal{L}_{\varphi_i}$ and $\overline{\mathcal{L}}_{\varphi_i}$.
    Also, let $\pi_i=\int \psi_i\overline{\psi_i} d\rho$ for $i=1,2$.

    Fix $\varepsilon>0$.
    Letting $r>0$ be sufficiently small,  we have
    \begin{align*}
        \left|S_m\varphi_i(x_0\cdots x_{m})-S_m\varphi_i(y_0\cdots y_m)\right|<\varepsilon,\\
        \left|\frac{\psi_i(x_0)}{\psi_i(y_0)}\right|\leq e^\varepsilon
        \quad\mbox{and}\quad
        \left|\frac{\overline{\psi_i}(x_m)}{\overline{\psi_i}(y_m)}\right|\leq e^\varepsilon
    \end{align*}
    for every $\underline{y}\in B_r$ and $i=1,2$.
    Then, for $i=1,2$,
    \begin{align*}
        \mu_{\varphi_i}(B_r)
        &=\int_{B_r} \psi_i(y_0)e^{S_m\varphi_i(y_0\cdots y_m)}\overline{\psi_i}(y_m) d\rho^{\otimes m+1}(y_0,\ldots, y_m)\\
%        &\leq \frac{e^{S_m\varphi_i(x_0\cdots x_m)+\varepsilon}}{\pi_i \lambda_i^m} \prod_{i=1}^{m-1}\rho(B(\sigma^i(\underline{x}),r)) \int_{B(\underline{x},r)\times B(\sigma^m(\underline{x}),r)} \psi_i(y_0)\psi_i(y_m) d\rho^{\otimes 2}(y_0,y_m)\\
        &\leq \frac{e^{S_m\varphi_i(x_0\cdots x_m)+\varepsilon}\psi_i(x_0)\overline{\psi_i}(x_0)e^{2\varepsilon}}{\pi_i \lambda_i^m} \prod_{i=0}^{m}\rho(B(\sigma^i(\underline{x}),r)).
    \end{align*}
    Note that $x_0=x_m$.
    Similarly, we have
    \begin{align*}
        \mu_{\varphi_i}(B_r)
        &\geq \frac{e^{S_m\varphi_i(x_0\cdots x_m)-\varepsilon}\psi_i(x_0)\overline{\psi_i}(x_0)e^{-2\varepsilon}}{\pi_i \lambda_i^m} \prod_{i=0}^{m}\rho(B(\sigma^i(\underline{x}),r)).
    \end{align*}

    Since $\mu_{\varphi_1}=\mu_{\varphi_2}$, by taking $\log$, we have
    \begin{align*}
        S_m\varphi_1(\underline{x})-S_m\varphi_2(\underline{x})
        &\geq \log \frac{\pi_1}{\pi_2}+m\log \frac{\lambda_1}{\lambda_2}-\log \frac{\psi_1(x_0)\overline{\psi_1}(x_0)}{\psi_2(x_0)\overline{\psi_2}(x_0)}-4\varepsilon.
    \end{align*}
    Since $\varepsilon>0$ is arbitrary and the opposite inequality is obtained by a similar argument, we have
    \begin{align}
        S_m\varphi_1(\underline{x})-S_m\varphi_2(\underline{x})
        =\log \frac{\pi_1}{\pi_2}+m\log \frac{\lambda_1}{\lambda_2}-\log \frac{\psi_1(x_0)\overline{\psi_1}(x_0)}{\psi_2(x_0)\overline{\psi_2}(x_0)}
        \label{eq:difference_dynamical_sum}
    \end{align}

    Let $k\in \mathbb{N}$.
    Since $\sigma^{mk}\underline{x}=\underline{x}$ and
    \begin{align*}
        S_{mk}\varphi_i(\underline{x})=kS_m\varphi_i(\underline{x})\ \quad (i=1,2)
    \end{align*}
    we have
    \begin{align*}
        k\left(S_m\varphi_1(\underline{x})-S_m\varphi_2(\underline{x})\right)=\log \frac{\pi_1}{\pi_2}+km\log \frac{\lambda_1}{\lambda_2}-\log \frac{\psi_1(x_0)\overline{\psi_1}(x_0)}{\psi_2(x_0)\overline{\psi_2}(x_0)}
    \end{align*}
    by \eqref{eq:difference_dynamical_sum}.
    Dividing by $k$ and letting $k\to\infty$, we have
    \begin{align*}
        S_m\varphi_1(\underline{x})-S_m\varphi_2(\underline{x})=m\log \frac{\lambda_1}{\lambda_2},
    \end{align*}
    which completes the proof.
\end{proof}

\subsection{Rate distortion dimension} \label{subsec:rate_distortion_dimension}
To conclude this section, we introduce the rate distortion dimension.
Let $(\Omega, \mathbb{P})$ be a probability space. 
Let $\mathcal{X}$ and $\mathcal{Y}$ be measurable spaces and $X:\Omega\rightarrow \mathcal{X}$ and $Y:\Omega\rightarrow \mathcal{Y}$ be measurable maps.
The {\it  mutual information} $I(X;Y)$ is defined by the supremum of 
\begin{align*}
\sum_{m=1}^M\sum_{n=1}^N \mathbb{P}((X,Y)\in P_m\times Q_n)\log \frac{\mathbb{P}((X,Y)\in P_m\times Q_n)}{\mathbb{P}(X\in P_m)\mathbb{P}(Y\in Q_n)}, 
\end{align*}
where $\{P_1, \ldots, P_M\}$ and $\{Q_1, \ldots, Q_N\}$ are partitions of $\mathcal{X}$ and $\mathcal{Y}$, respectively, with the convention that $0\log (0/a)=0$ for all $a\geq 0$.

Let $T:\mathcal{X}\rightarrow \mathcal{X}$ be a continuous map on a compact metric space $(\mathcal{X}, d_\mathcal{X})$.
%Let $(\mathcal{X},T)$ be a topological dynamical system with a metric $d_\mathcal{X}$.
Let $\mu$ be a $T$-invariant Borel probability measure on $\mathcal{X}$.
For $\varepsilon>0$, {\it the rate distortion function} $R(d,\mu,\varepsilon)$ is defined by the infimum of 
\begin{align*}
    \frac{I(X;Y)}{n},
\end{align*}
where $n\in \mathbb{N}$, $X$ and $Y=(Y_0, \ldots, Y_{n-1})$ are random variables defined on some probability space $(\Omega, \mathbb{P})$ such that $X$ takes values in $\mathcal{X}$, its law is given by $\mu$, and $Y_k$ takes values in $\mathcal{X}$ for every $0\leq k\leq n-1$, satisfying the {\it  distortion condition}
\begin{align}
    \mathbb{E}\left(\frac{1}{n}\sum_{k=0}^{n-1} d_\mathcal{X}(T^k X, Y_k)\right)<\varepsilon.
    \label{distortion_condition}
\end{align}
Here $\mathbb{E}(\cdot)$ is the expectation with respect to the probability measure $\mathbb{P}$.
The {\it  upper/lower rate distortion dimension} %$R_\mu(\varepsilon)$ 
is defined by
\begin{align*}
    \overline{\rdim}(\mathcal{X}, T, d, \mu)&=\limsup_{\varepsilon\to0}\frac{R(d,\mu, \varepsilon)}{|\log \varepsilon|},\\
    \underline{\rdim}(\mathcal{X}, T, d, \mu)&=\liminf_{\varepsilon\to0}\frac{R(d,\mu, \varepsilon)}{|\log \varepsilon|}.
\end{align*}
For simplicity, we write the upper and lower rate distortion dimensions 
as $\overline{\rdim}(\mu)$ and $\underline{\rdim}(\mu)$, respectively, 
when the dynamical system and the metric are clear from the context.

We use the following notation in the proof of the main theorems:
\begin{definition}
    \begin{align*}
        H(X)&=\sum_{x\in  \mathcal{X}}-\mathbb{P}(X=x)\log \mathbb{P}(X=x)\\
        H(X, Y)&=\sum_{x\in \mathcal{X}}\sum_{y\in \mathcal{Y}}-\mathbb{P}(X=x,Y=y)\log \mathbb{P}(X=x, Y=y)\\
        H(X\mid Y)&=\sum_{x\in \mathcal{X}}\sum_{y\in \mathcal{Y}}-\mathbb{P}(X=x,Y=y)\log \mathbb{P}(X=x \mid Y=y)
    \end{align*}
\end{definition}

We recall the following standard lemma on mutual information.
\begin{lemma}[Theorem 2.5.2 (Chain rule for mutual information) in \cite{CT06}]
\label{lem:chain_rule_mutual}
    Let $\{X_0,X_1, \ldots, X_N\}$ be a sequence of finite-valued random variables and $Y$ be a finite-valued random variable.
    Then we have
    \begin{align}
        I(X_0,\ldots, X_N; Y)&=H(X_0,\ldots, X_N)-H(X_0, \ldots, X_N\mid Y)\nonumber\\
        &=H(X_0)+\sum_{i=1}^{N-1}H(X_i\mid X_{0},\ldots, X_{i-1})\nonumber\\
            &\hspace{20pt}-\left(H(X_0\mid Y)+\sum_{i=1}^{N-1}H(X_i\mid X_0, \ldots, X_{i-1}, Y)\right)\nonumber\\
        &\geq H(X_0)+\sum_{i=1}^{N-1}H(X_i\mid X_{0},\ldots, X_{i-1})-\sum_{i=0}^{N-1}H(X_i\mid Y)
        \label{eq:chain_rule_mutual}
    \end{align}
\end{lemma}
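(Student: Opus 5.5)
The plan is to reduce everything to the standard finite-alphabet identities. Write $\B{X}=(X_0,\ldots,X_N)$, a random variable with finitely many values. The first step is to check that, for finite-valued $\B{X}$ and $Y$, the supremum over partitions in the definition of $I(\B{X};Y)$ is attained at the partitions into atoms, i.e. the partitions of the (finite) ranges into singletons. For this I will show that the partition sum
\begin{align*}
\sum_{m,n}\mathbb{P}(P_m\times Q_n)\log\frac{\mathbb{P}(P_m\times Q_n)}{\mathbb{P}(\B{X}\in P_m)\mathbb{P}(Y\in Q_n)}
\end{align*}
does not decrease when $\{P_m\}$ or $\{Q_n\}$ is refined. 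This follows from the log-sum inequality $\sum_k a_k\log(a_k/b_k)\geq (\sum_k a_k)\log(\sum_k a_k/\sum_k b_k)$, applied inside each cell that is split. Since every partition is coarsened by the atomic one (atoms outside the range carry zero mass and are irrelevant by the convention $0\log(0/a)=0$), the supremum equals
\begin{align*}
\sum_{\B{x},y}\mathbb{P}(\B{X}=\B{x},Y=y)\log\frac{\mathbb{P}(\B{X}=\B{x},Y=y)}{\mathbb{P}(\B{X}=\B{x})\mathbb{P}(Y=y)}.
\end{align*}
I expect this identification to be the only step that is not purely formal, and the log-sum refinement argument is what I would use.

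Next, I will split the logarithm as $\log\mathbb{P}(\B{X}=\B{x}\mid Y=y)-\log\mathbb{P}(\B{X}=\B{x})$. Summing against the joint law gives the first equality $I(\B{X};Y)=H(\B{X})-H(\B{X}\mid Y)$.

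Then I will apply the chain rule for entropy. Factor $\mathbb{P}(X_0=x_0,\ldots,X_N=x_N)$ as the product of the successive conditional probabilities $\mathbb{P}(X_i=x_i\mid X_0=x_0,\ldots,X_{i-1}=x_{i-1})$. Taking $-\log$ and the expectation gives $H(\B{X})=H(X_0)+\sum_i H(X_i\mid X_0,\ldots,X_{i-1})$. The same factorization, done under $\mathbb{P}(\,\cdot\mid Y=y)$ and then averaged over $y$, gives $H(\B{X}\mid Y)=H(X_0\mid Y)+\sum_i H(X_i\mid X_0,\ldots,X_{i-1},Y)$. Together these yield the second line of \eqref{eq:chain_rule_mutual}. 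I will keep the summation range exactly as written in the statement, with the same index set in both expansions.

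Finally, the inequality follows from ``conditioning reduces entropy'':
\begin{align*}
H(X_i\mid X_0,\ldots,X_{i-1},Y)\leq H(X_i\mid Y).
\end{align*}
To prove it, note that the difference $H(X_i\mid Y)-H(X_i\mid X_0,\ldots,X_{i-1},Y)$ is a conditional mutual information. For each fixed $y$ it is a Kullback--Leibler divergence, which is nonnegative by Jensen's inequality for the concave function $\log$, or again by the log-sum inequality. Substituting these bounds termwise into the expansion of $H(\B{X}\mid Y)$ gives \eqref{eq:chain_rule_mutual}.
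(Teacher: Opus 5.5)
The ingredients you use are the right ones, and they are what the paper's citation of Cover--Thomas amounts to (the paper gives no proof of its own): $I=H-H(\cdot\mid Y)$, the entropy chain rule, and ``conditioning reduces entropy.'' Your preliminary step, showing via the log-sum inequality that the partition-supremum definition of $I$ reduces to the atomic sum, is correct. It also fills a point the paper passes over silently. The gap is your decision to ``keep the summation range exactly as written in the statement, with the same index set in both expansions.''

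The statement has $N+1$ variables $X_0,\ldots,X_N$, but its sums stop at $i=N-1$. Factoring $\mathbb{P}(X_0=x_0,\ldots,X_N=x_N)$ as you describe produces terms up to $i=N$:
\begin{align*}
H(X_0,\ldots,X_N)-H(X_0,\ldots,X_N\mid Y)
&=H(X_0)-H(X_0\mid Y)\\
&\quad+\sum_{i=1}^{N}\bigl(H(X_i\mid X_0,\ldots,X_{i-1})-H(X_i\mid X_0,\ldots,X_{i-1},Y)\bigr).
\end{align*}
Truncating both sums at $N-1$ leaves exactly $I(X_0,\ldots,X_{N-1};Y)$. This differs from $I(X_0,\ldots,X_N;Y)$ by $H(X_N\mid X_0,\ldots,X_{N-1})-H(X_N\mid X_0,\ldots,X_{N-1},Y)\geq 0$, and that difference can be strictly positive. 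For example, if $X_0,\ldots,X_{N-1}$ are constant and $X_N=Y$ is a fair bit, the left side is $\log 2$ while the second line of \eqref{eq:chain_rule_mutual} is $0$. So the middle equality, read literally, is false, and your argument does not ``yield the second line'' as you claim.

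The repair is easy, but you must make it explicitly; either of the following works.
\begin{enumerate}
\item Read the lemma for the $N$ variables $X_0,\ldots,X_{N-1}$. This is how it is used in the proof of Theorem~\ref{main:Gibbs_RDD}, where it is applied to $\widetilde{X}_0,\ldots,\widetilde{X}_{N-1}$, and your argument then goes through verbatim.
\item Keep $X_0,\ldots,X_N$ and replace the second ``$=$'' by ``$\geq$'', justified by $I(X_0,\ldots,X_N;Y)\geq I(X_0,\ldots,X_{N-1};Y)$. This monotonicity follows directly from the partition definition of $I$: pulling back a partition of the range of $(X_0,\ldots,X_{N-1})$ along the projection gives an admissible partition for $(X_0,\ldots,X_N)$ with the same partition sum. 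It also follows from your ``conditioning reduces entropy'' step applied to the $i=N$ term above.
\end{enumerate}
With either fix, the final inequality of \eqref{eq:chain_rule_mutual} holds as stated.
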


\section{proof of main theorems}

\subsection{Mean dimension with a potential for $([0,1]^D)^{\mathbb{N}_0}$} \label{subsec:main_thm_mean_dimension}
We use the following fact on the maximum ergodic average:
\begin{proposition}[\cite{Jen06}]
    Let $T:\mathcal{Y}\rightarrow \mathcal{Y}$ be a continuous map on a compact metric space $\mathcal{Y}$.
    For a continuous function $\varphi:\mathcal{Y}\rightarrow \mathbb{R}$, we have
    \begin{align}
        \beta(\varphi)=\limsup_{n\to\infty} \sup_{y\in \mathcal{Y}}\frac{1}{n}S_n\varphi(y)
            =\sup_{y\in \mathcal{Y}}\limsup_{n\to\infty}\frac{1}{n}S_n\varphi(y), \label{max_ergodic_average}
    \end{align}
    where $\displaystyle \beta(\varphi) := \sup_{\mu\in \mathcal{M}_T(\mathcal{Y}) } \int_{\mathcal{Y}} \varphi \ \mathrm{d} \mu$. 
\end{proposition}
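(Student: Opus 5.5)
We must prove Jenkinson's identity: $\beta(\varphi)=\limsup_{n}\sup_y \frac1n S_n\varphi(y)=\sup_y\limsup_n\frac1n S_n\varphi(y)$. We write $A:=\limsup_{n}\sup_y \frac1n S_n\varphi(y)$ and $B:=\sup_y\limsup_n \frac1n S_n\varphi(y)$. The plan is to establish the chain $\beta(\varphi)\le B\le A\le\beta(\varphi)$.

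\emph{Step 1 ($B\le A$).} This is immediate: for each $y$ and $n$ we have $\frac1nS_n\varphi(y)\le\sup_z\frac1nS_n\varphi(z)$. Taking $\limsup_n$ and then $\sup_y$ gives the claim.

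\emph{Step 2 ($\beta(\varphi)\le B$).} The supremum defining $\beta(\varphi)$ is attained by some invariant measure $\mu$, since $\mathcal{M}_T(\mathcal{Y})$ is weak$^*$ compact and $\mu\mapsto\int\varphi\,d\mu$ is continuous. By ergodic decomposition, $\int\varphi\,d\mu$ is an average of $\int\varphi\,d\nu$ over ergodic components $\nu$, so some ergodic $\nu$ satisfies $\int\varphi\,d\nu\ge\int\varphi\,d\mu=\beta(\varphi)$. Birkhoff's ergodic theorem then gives a point $y$ with $\frac1nS_n\varphi(y)\to\int\varphi\,d\nu$. Hence $B\ge\beta(\varphi)$.

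\emph{Step 3 ($A\le\beta(\varphi)$).} This is the only step with real content. Choose $n_k\to\infty$ and points $y_k$ with $\frac1{n_k}S_{n_k}\varphi(y_k)\to A$; the supremum over $y$ is attained by compactness of $\mathcal{Y}$ and continuity of $S_n\varphi$. Form the empirical measures
\begin{align*}
\mu_k=\frac1{n_k}\sum_{i=0}^{n_k-1}\delta_{T^iy_k},
\end{align*}
so that $\int\varphi\,d\mu_k=\frac1{n_k}S_{n_k}\varphi(y_k)$. Pass to a weak$^*$ convergent subsequence $\mu_k\to\mu$. Then $\mu$ is $T$-invariant, because for continuous $g$,
\begin{align*}
\left|\int g\circ T\,d\mu_k-\int g\,d\mu_k\right|\le \frac{2\|g\|_\infty}{n_k}\to0.
\end{align*}
Weak$^*$ convergence also gives $\int\varphi\,d\mu=A$, so $A\le\beta(\varphi)$. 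This is the Krylov--Bogolyubov argument.

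I do not expect a serious obstacle. The points needing care are the invariance of the limit measure in Step 3 and the appeal to ergodic decomposition in Step 2. One could avoid ergodic decomposition by noting that the extreme points of $\mathcal{M}_T(\mathcal{Y})$ are the ergodic measures, so the linear functional $\mu\mapsto\int\varphi\,d\mu$ attains its maximum at an extreme point of this compact convex set (Bauer's maximum principle).
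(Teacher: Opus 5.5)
Your proof is correct and complete. The chain $\beta(\varphi)\le B\le A\le\beta(\varphi)$ is the standard proof of this fact: Birkhoff's theorem applied to an ergodic component gives the first inequality, and the Krylov--Bogolyubov empirical-measure argument gives the last.

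The paper does not prove this proposition; it only quotes it from Jenkinson's survey, so there is no argument of its own to compare yours with. One small simplification is available in Step 2. The non-ergodic Birkhoff theorem already suffices: for a maximizing $\mu$, the almost-everywhere limit $\varphi^*$ of $\frac1n S_n\varphi$ satisfies $\int\varphi^*\,d\mu=\int\varphi\,d\mu=\beta(\varphi)$. Hence $\varphi^*(y)\ge\beta(\varphi)$ at some point $y$ where the limit exists, with no ergodic decomposition or Bauer maximum principle needed.
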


\begin{proof}[Proof of Theorem \ref{main:mdim_potential}]
    First, we show that $\mdim(([0,1]^D)^{\mathbb{N}_0}, \sigma,\varphi)$ is bounded above.
    Fix $\varepsilon>0$.
    Fix $k\geq 1$ with $2^{-k} < \varepsilon$ and $N\geq k+1$.
    By \eqref{max_ergodic_average}, without loss of generality, we may assume
    \begin{align*}
        \beta(\varphi)+\varepsilon>\sup_{\underline{\B{x}}\in([0,1]^D)^{\mathbb{N}_0}}\frac{1}{N}S_N\varphi(\underline{\B{x}}).
    \end{align*}
    Set $f:([0,1]^D)^{\mathbb{N}_0}\rightarrow([0,1]^D)^{k+N+1}$ by
    \begin{align*}
        %f(\underline{\B{x}})=\left(
        %\begin{pmatrix}x_{-k,1}\\\vdots\\x_{-k,D}\end{pmatrix}, \ldots, \begin{pmatrix}x_{0,1}\\\vdots\\x_{0,D}\end{pmatrix}, \ldots, \begin{pmatrix}x_{k+N,1}\\\vdots\\x_{k+N,D}\end{pmatrix}\right).
        f(\underline{\B{x}})
        = (\B{x}_{0}, \ldots, \B{x}_{k+N})
        =
        \left(\begin{pmatrix}x_{0,1}\\\vdots\\x_{0,D}\end{pmatrix}, \ldots, \begin{pmatrix}x_{k+N,1}\\\vdots\\x_{k+N,D}\end{pmatrix}\right).
    \end{align*}
    It is easy to see that $f$ is a $2^{-k}$-embedding.
    Hence we have
    \begin{align*}
        \Wdim_{2^{-k}}(([0,1]^D)^{\mathbb{N}_0}, d_N, S_N\varphi)
        &\leq \max_{\underline{\B{x}}\in ([0,1]^D)^{\mathbb{N}_0}}(\dim_{f(\underline{\B{x}})} ([0,1]^D)^{k+N+1}+S_N\varphi(\underline{\B{x}}))\\
        &\leq (k+N+1)D+N(\beta(\varphi)+\varepsilon).
    \end{align*}
    Dividing by $N$, letting $N\to\infty$ on both sides and then letting $\varepsilon\to0$, we have
    \begin{align*}
        \mdim(([0,1]^D)^{\mathbb{N}_{0}}, \sigma, \varphi)\leq D+\beta(\varphi).
    \end{align*}
    
    Second, we give the estimate from below.
    Fix $\varepsilon>0$.
    By \eqref{max_ergodic_average}, there exists $\underline{\B{z}}\in ([0,1]^D)^{\mathbb{N}_0}$ such that
    \begin{align*}
        \beta(\varphi)<\limsup_{n\to\infty}\frac{1}{n}S_n\varphi(\underline{\B{z}})+\varepsilon.
    \end{align*}
    By the uniform continuity of $\varphi$, take $\delta>0$ such that $d(\underline{\B{x}},\underline{\B{y}})\leq\delta$ implies $|\varphi(\underline{\B{x}})-\varphi(\underline{\B{y}})|<\varepsilon$
    for all $\B{x}, \B{y} \in ([0, 1]^{D})^{\mathbb{N}_{0}}$.
    For each $n\geq 1$, set 
    \begin{align*}
            Y_n=\Pi_{i=-\infty}^{-1}\{\B{z}_i\}\times\Pi_{i=0}^{n-1}I_i\times \Pi_{i=n}^\infty \{\B{z}_i\}, 
    \end{align*}
    %where $I_i$ is a closed cube containing $\B{z}_i$ whose sides are all the length $\delta$.
    where $I_i := \{ \ \B{x} \in [0, 1]^{D} \mid d_{\mathrm{max}}(\B{x}, \B{z}_{i}) \leq \delta \ \}$. 
    Note that for every $\underline{\B{y}}\in Y_n$ we have
    \begin{align*}
        d(\underline{\B{y}}, \underline{\B{z}})=\sum_{i=0}^{n-1}\frac{d_{\max}(\B{y}_i,\B{z}_i)}{2^{i+1}}
            \leq \delta.
    \end{align*}
    % Let $d_{\max}$ be a metric on $[0,1]^n$ defined by
    % \begin{align*}
    %     d_{\max}(x,y)=\max_{0\leq i\leq n-1}|x_i-y_i|
    % \end{align*}
    % for every $x=(x_i),y=(y_i)\in[0,1]^n$.
    By \cite[Lemma 2.2]{Tsu25}, we have
    \begin{align*}
        \Wdim_{\delta}(([0,1]^D)^{\mathbb{N}_0}, d_n, S_n\varphi)
        &\geq \Wdim_\delta(Y_n,d_n, S_n\varphi)\\
%        &\geq \Wdim_{\delta}(Y_n, d_n, S_n\varphi(\underline{\B{z}})-n\varepsilon)\\
        &\geq \Wdim_{\delta}(Y_n, d_n, S_n\varphi-n\varepsilon)\\
        &\geq \Wdim_{\delta}(([0,1]^D)^{n}, d_{\max})+S_n\varphi(\underline{\B{z}})-n\varepsilon\\
        &=nD+S_n\varphi(\underline{\B{z}})-n\varepsilon.
    \end{align*}
    Dividing by $n$ and letting $n\to\infty$, we have
    \begin{align*}
        \lim_{n\to\infty}\frac{\Wdim_\delta(([0,1]^D)^{\mathbb{N}_0}, d_n, S_n\varphi)}{n}\geq D+\limsup \frac{1}{n}S_n\varphi(\underline{\B{z}})-\varepsilon
        &\geq D+\beta(\varphi)-2\varepsilon.
    \end{align*}
    Letting $\delta\to0$ and $\varepsilon\to0$, we have $\mdim(([0,1]^D)^{\mathbb{N}_0}, \sigma, \varphi)\geq D+\beta(\varphi)$.
\end{proof}

\subsection{Proof of Theorem \ref{main:Gibbs_RDD}} \label{subsec:main_thm_rate_distort_dimension}

Since $\mu_\varphi$ is a $\sigma$-invariant Borel probability measure on $\mathcal{X}^{\mathbb{N}_0}$, the general inequality $$\mdim_M(\mathcal{X}^{\mathbb{N}_0},\sigma,d)\geq\sup_{\mu\in\mathcal{M}_\sigma(\mathcal{X}^{\mathbb{N}_0})}\overline{\rdim}(\mathcal{X}^{\mathbb{N}_0},\sigma,d,\mu)$$ (see \cite{LinTsu18}), together with Proposition \ref{main:mean_Hausdorff_Minkowski}, immediately gives
\begin{align*}
    \overline{\rdim}(\mu_\varphi)\leq s.
\end{align*}

Now we consider the lower bound of $\underline{\rdim}(\mu_\varphi)$.
Let $\varepsilon>0$.
For a maximal $\varepsilon$-separating set $\{x_1,\ldots, x_\ell\}$, set a partition function $p:\mathcal{X}\rightarrow \{1,\ldots, \ell\}$ by
\begin{align*}
    p(y)=i    \quad\mbox{if}\quad  y\in \widetilde{B}(x_i,\varepsilon)
\end{align*}
where $\widetilde{B}(x_1,\varepsilon)=\overline{B}(x_1,\varepsilon)$ and $\displaystyle\widetilde{B}(x_i,\varepsilon)=\overline{B}(x_i,\varepsilon)\setminus\left(\bigcup_{j=1}^{i-1}\overline{B}(x_j,\varepsilon)\right)$ for all $2\leq i\leq \ell$.

The following lemma holds without Ahlfors regularity.
\begin{lemma}
    \label{lem:upper_relative_entropy}
    Let $U$ and $V$ be $\mathcal{X}$-valued random variables.
    Take $\varepsilon>0$ and a maximal $\varepsilon$-separating set $\{x_1,\ldots, x_\ell\}$ and let $p:\mathcal{X}\rightarrow \{1,\ldots, \ell\}$ be the partition function with respect to $\{x_1,\ldots, x_\ell\}$.
    Then we have
    \begin{align}
        H(p(U)\mid p(V))\leq \log 2+ \log \max_{1\leq j\leq \ell} \#\{i\mid \overline{B}(x_i,\varepsilon)&\cap \overline{B}(x_j,2\varepsilon)\neq \emptyset\}\nonumber\\
        &+\mathbb{P}(d(U,V)>\varepsilon)\times \log \ell.
        \label{eq:upper_relative_entropy}
    \end{align}
\end{lemma}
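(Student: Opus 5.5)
The plan is a Fano-type argument: introduce the indicator $E=\mathbf{1}_{\{d(U,V)>\varepsilon\}}$ and bound $H(p(U)\mid p(V))$ by conditioning additionally on $E$. By the chain rule,
\begin{align*}
    H(p(U)\mid p(V))\leq H(p(U),E\mid p(V)) = H(E\mid p(V))+H(p(U)\mid p(V),E).
\end{align*}
Since $E$ is $\{0,1\}$-valued, the first term is at most $H(E)\leq \log 2$. For the second term we split according to the value of $E$:
\begin{align*}
    H(p(U)\mid p(V),E)=\mathbb{P}(E=0)\,H(p(U)\mid p(V),E=0)+\mathbb{P}(E=1)\,H(p(U)\mid p(V),E=1).
\end{align*}
On $\{E=1\}$ we simply use that $p(U)$ takes at most $\ell$ values, so $H(p(U)\mid p(V),E=1)\leq \log \ell$. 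This produces the term $\mathbb{P}(d(U,V)>\varepsilon)\log\ell$.

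On $\{E=0\}$, fix $p(V)=j$. Then $V\in \widetilde{B}(x_j,\varepsilon)\subset\overline{B}(x_j,\varepsilon)$ and $d(U,V)\leq\varepsilon$, hence $U\in\overline{B}(x_j,2\varepsilon)$. Since also $U\in\widetilde{B}(x_{p(U)},\varepsilon)\subset \overline{B}(x_{p(U)},\varepsilon)$, the index $i=p(U)$ satisfies $\overline{B}(x_i,\varepsilon)\cap\overline{B}(x_j,2\varepsilon)\neq\emptyset$. Thus, conditionally on $\{p(V)=j,E=0\}$, the variable $p(U)$ is supported on a set of cardinality at most $\max_j\#\{i\mid \overline{B}(x_i,\varepsilon)\cap \overline{B}(x_j,2\varepsilon)\neq\emptyset\}$. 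Its conditional entropy is therefore bounded by the logarithm of this maximum. Averaging over $j$ and multiplying by $\mathbb{P}(E=0)\leq 1$ gives the middle term. The maximality of the separating set is used only to guarantee that $p$ is defined everywhere, because the balls $\overline{B}(x_i,\varepsilon)$ cover $\mathcal{X}$.

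Combining the three bounds yields \eqref{eq:upper_relative_entropy}. The only delicate point is the bookkeeping of conditioning on the event $E$, specifically writing the conditional entropy as a weighted sum over the values of $(p(V),E)$ and applying the cardinality bound $H\leq\log(\#\text{support})$ fiberwise. No Ahlfors regularity enters. I do not expect any real obstacle; one should only note that zero-probability fibers contribute nothing.
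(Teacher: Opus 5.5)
Your proposal is correct and follows essentially the same route as the paper: introduce the indicator $E$ of $\{d(U,V)>\varepsilon\}$, bound $H(E\mid p(V))\leq\log 2$, and split $H(p(U)\mid p(V),E)$ into the case $E=0$ (where $p(U)\in I_{p(V)}$ by the triangle inequality) and the case $E=1$ (bounded by $\log\ell$). Your direct fiberwise bound $H(p(U)\mid p(V),E=1)\leq\log\ell$ is in fact slightly cleaner than the paper's intermediate comparison with $H(p(U))$, which is not justified in general because conditioning on an event can increase entropy.
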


\begin{proof}
    Set a $\{0,1\}$-valued random variable $E$ by
    \begin{align*}
        E= \left\{ \begin{array}{lll}
            1 &\mbox{if}& d(U,V)>\varepsilon,  \\
            0 & \mbox{else}.  
        \end{array} \right. 
    \end{align*}
    
    By the formula for the conditional entropy
    %\textcolor{red}{(cf.Lemma 2.4 in \cite{LinTsu18})},
    we have
    \begin{align}
        H(p(U)\mid p(V))
            &=H(E\mid p(V))+H(p(U)\mid E,p(V))-H(E\mid p(U),p(V))\nonumber\\
            &\leq H(E)+\mathbb{P}(E=0)H(p(U)\mid E=0,p(V))+\mathbb{P}(E=1)H(p(U)\mid E=1, p(V))\nonumber\\
            &\leq \log 2+H(p(U)\mid E=0,p(V))+\mathbb{P}(E=1)H(p(U))\nonumber\\
            &\leq \log 2+H(p(U)\mid E=0,p(V))+\mathbb{P}(d(U,V)>\varepsilon)\log \ell.\label{eq:conditional_entropy_upper}
    \end{align}

    \noindent
    \underline{Estimate on the middle term of \eqref{eq:conditional_entropy_upper}}\vspace{3pt}\\
    For each $1\leq j\leq \ell$, set $I_j=\{i\mid \overline{B}(x_i,\varepsilon)\cap \overline{B}(x_j,2\varepsilon)\neq \emptyset\}$.
    Since $E=0$ implies $d_{\mathcal{X}}(U,x_{p(V)})\leq d_{\mathcal{X}}(U,V)+d_{\mathcal{X}}(V,x_{p(V)})\leq 2\varepsilon$, we have $U\in \overline{B}(x_{p(U)},\varepsilon)\cap \overline{B}(x_{p(V)},2\varepsilon)\neq \emptyset$ and $p(U)\in I_{p(V)}$.
    % Set $I_E=\{(i,j)\mid i\in I_j, j\in I_i\}$.
    % Similarity, we have $p(V)\in I_{p(U)}$.
    Hence we have
    \begin{align*}
        H(p(U)\mid E=0,p(V))
        &=\sum_{i,j}-\mathbb{P}\begin{pmatrix}
            E=0\\p(U)=i\\p(V)=j
        \end{pmatrix}
        \log \frac{\mathbb{P}\begin{pmatrix}
            E=0\\p(U)=i\\p(V)=j
        \end{pmatrix}}{\mathbb{P}
        \begin{pmatrix}
            E=0\\p(V)=j
        \end{pmatrix}
        }\\
        &= \sum_{j=1}^\ell \mathbb{P}\begin{pmatrix}
            E=0\\p(V)=j
        \end{pmatrix}
        \sum_{i\in I_j}
        -\frac{\mathbb{P}\begin{pmatrix}
            E=0\\p(U)=i\\p(V)=j
        \end{pmatrix}}{
        \mathbb{P}\begin{pmatrix}
            E=0\\p(V)=j
        \end{pmatrix}
        }
        \log \frac{
        \mathbb{P}\begin{pmatrix}
            E=0\\p(U)=i\\p(V)=j
        \end{pmatrix}}{\mathbb{P}
        \begin{pmatrix}
            E=0\\p(V)=j
        \end{pmatrix}
        }\\
        &\leq \sum_{j=1}^\ell \mathbb{P}\begin{pmatrix}
            E=0\\p(V)=j
        \end{pmatrix} \log \#I_j\\
        &\leq \log \max_{1\leq j\leq \ell} \#I_j,
    \end{align*}
    which completes the proof.
\end{proof}

\begin{proof}[Proof of Theorem \ref{main:Gibbs_RDD}]
    Let $\mu$ be the Gibbs measure for $\varphi$ with a priori measure $\rho$ on $(\mathcal{X},d_{\mathcal{X}})$.

    Let $\varepsilon>0$.
    Let $X$ be an $\mathcal{X}^{\mathbb{N}_0}$-valued random variable whose law is $\mu$.
    Take $N\geq 1$ and a sequence of $\mathcal{X}^{\mathbb{N}_0}$-valued random variables $Y=(Y_0, Y_1, \ldots, Y_{N-1})$ with the distortion condition \eqref{distortion_condition}.

    Fix $L>1$ and $\kappa\geq 2$.
    Set $\varepsilon_n=L\kappa^{-n}$ for each $n\in \mathbb{N}$.
    For each $n$, take a maximal $\varepsilon_n$-separating set $\{x_1^{(n)},\ldots, x_{\ell_n}^{(n)}\}$ and a partition function $p=p_n:\mathcal{X}\rightarrow \{1,\ldots, \ell_n\}$.
    For each $i\in \{0,1,\ldots, N\}$, set $\widetilde{X}_i=p(X_i)$ and for $i\in \{0,1,\ldots, N-1\}$ set $\widetilde{Y}_{i,0}=p(Y_{i,0})$.

     Take $n\in \mathbb{N}$ such that $\kappa^{-n-1}\leq \varepsilon<\kappa^{-n}$.
     By the data processing inequality in \cite[Lemma 2.2]{LinTsu18}, 
%    and by Lemma \ref{superadditivity_Markov} 
    we have
    \begin{align*}
        \frac{1}{N}I(X;Y)&\geq \frac{1}{N} I(\widetilde{X}_0, \ldots, \widetilde{X}_{N-1}; Y)\\
            &\geq \frac{1}{N} I(\widetilde{X}_0, \ldots, \widetilde{X}_{N-1}; \widetilde{Y}_{0,0}, \ldots, \widetilde{Y}_{N-1,0}).
    \end{align*}
    By Lemma \ref{lem:chain_rule_mutual} and the Markov property of $X$, we have
    \begin{align*}
         \frac{1}{N}&I(X;Y)\\
         &\geq \frac{1}{N}\left(H(\widetilde{X}_0)+(N-1)H(\widetilde{X}_1\mid \widetilde{X}_0) \right)
         -\frac{1}{N}\sum_{m=0}^{N-1}H(\widetilde{X}_m\mid \widetilde{Y}_{m,0}).
    \end{align*}
   Since for every $m\in\{0,1,\ldots, N-1\}$, we have
    \begin{align*}
        \mathbb{P}\left(d_{\max}(X_m,Y_{m,0})> \varepsilon_n\right)
        \leq \frac{\mathbb{E}\left(d_{\max}(X_m,Y_{m,0})\right)}{\varepsilon_n},
    \end{align*}
    by the distortion condition \eqref{distortion_condition} and \eqref{eq:upper_relative_entropy} in Lemma \ref{lem:upper_relative_entropy}, we have
    \begin{align*}
        \frac{1}{N}\sum_{i=0}^{N-1}&H(\widetilde{X}_m|\widetilde{Y}_{m,0})\\
        &\leq \log 2+\log \max_{1\leq j\leq \ell_n}\#I_j+
        \frac{\log \ell_n}{N}\sum_{m=0}^{N-1} \mathbb{P}\left(d_{\max}(X_m,Y_{m,0})>\varepsilon_n \right)\\
        &\leq \log 2+\log \max_{1\leq j\leq \ell_n}\#I_j+\frac{\log \ell_n}{N}\sum_{m=0}^{N-1}\frac{\mathbb{E}\left(d_{\max}(X_m,Y_{m,0})\right)}{\varepsilon_n}\\
        &=\log 2+\log \max_{1\leq j\leq \ell_n}\#I_j+\frac{\log \ell_n}{\varepsilon_n}\mathbb{E}\left(\frac{1}{N}\sum_{m=0}^{N-1}\left(d_{\max}(X_m,Y_{m,0})\right)\right)\\
        &\leq \log 2+\log \max_{1\leq j\leq \ell_n}\#I_j+\frac{\log \ell_n}{\varepsilon_n}\mathbb{E}\left(\frac{1}{N}\sum_{m=0}^{N-1}d(\sigma^mX, Y_m )\right)\\
        &\leq \log 2+\log \max_{1\leq j\leq \ell_n}\#I_j+\frac{\log \ell_n}{L\kappa^{-n}}\varepsilon 
        \quad (\because \eqref{distortion_condition})\\
        &\leq \log 2+\log \max_{1\leq j\leq \ell_n}\#I_j+\frac{\log \ell_n}{L}.
        \quad (\because \varepsilon<\kappa^{-n})
    \end{align*}
    Moreover, by \eqref{eq:upper_adj} in Lemma \ref{lem:upper_adj}, we have
    \begin{align}
        \frac{1}{N}\sum_{i=0}^{N-1}H(\widetilde{X}_m|\widetilde{Y}_{m,0})
            \leq \log 2+ \log C3^s+\frac{\log \ell_n}{L}
        \label{eq:sum_relative_entorpy}
    \end{align}

     Since $\psi, \overline{\psi}$ are positive and uniformly continuous on $Y$, set $\displaystyle \psi_{\rm min}=\min\{\min_{x\in \mathcal{X}}\psi(x), \min_{x\in \mathcal{X}}\overline{\psi}(x)\}>0$.
    Note that $\overline{B}(x_i^{(n)},\kappa^{-1}\varepsilon_n)\cap \overline{B}(x_j^{(n)},\kappa^{-1}\varepsilon_n) =\emptyset$ whenever $i\neq j$ since $\{x_1^{(n)},\ldots, x_{\ell_n}^{(n)}\}$ is $\varepsilon_n$-separating.
    Hence for each $i,j\in \{1,\ldots, \ell_n\}$, we have
    \begin{align}
        \psi_{\rm min} \rho(\overline{B}(x_i^{(n)},\kappa^{-1}\varepsilon_n))
        &\leq \int_{\overline{B}(x_i^{(n)}, \varepsilon_n)}\theta (x) dx  
        \label{eq: lower_ball_1}\\
        &=\frac{1}{\pi} \int_{\overline{B}(x_i^{(n)}, \varepsilon_n)}\psi(x)\overline{\psi}(x) dx \nonumber\\
        &\leq \frac{1}{\pi} \|\psi\|_\infty \|\overline{\psi}\|_\infty \rho(\overline{B}(x_i^{(n)}, \varepsilon_n)) \nonumber
    \end{align}
    and
    \begin{align}
        \int \int _{\overline{B}(x_i^{(n)}, \varepsilon_n)\times \overline{B}(x_j^{(n)}, \varepsilon_n)}& \theta (x_1)K(x_1,x_2) dx_1 dx_2 \nonumber\\
        &=\frac{1}{\pi \lambda}\int \int _{\overline{B}(x_i^{(n)}, \varepsilon_n)\times \overline{B}(x_j^{(n)}, \varepsilon_n)}\psi(x_1) e^{\varphi(x_1, x_2)}\overline{\psi}(x_2) dx_1dx_2 \nonumber\\
 %       &\leq \frac{1}{\pi\lambda}  \|\psi\|_\infty \|\overline{\psi}\|_\infty \rho(\overline{B}(x_i^{(n)}, \varepsilon_n))\rho(\overline{B}(x_j^{(n)}, \varepsilon_n))\\
        &\leq \frac{1}{\pi\lambda}  \|\psi\|_\infty \|\overline{\psi}\|_\infty \rho(\overline{B}(x_i^{(n)}, \varepsilon_n))\rho(\overline{B}(x_j^{(n)}, \varepsilon_n))
        \label{eq:upper_ball_2}
    \end{align}

    Since $x\mapsto -\log x$ is decreasing, we have
    \begin{align}
       H(\widetilde{X}_1\mid \widetilde{X}_0) &=H(\widetilde{X}_0, \widetilde{X}_1)-H(\widetilde{X}_0) \nonumber\\
            &=\sum_{i,j}-\mathbb{P}(\widetilde{X}_0=i, \widetilde{X}_1=j)\log \frac{\mathbb{P}(\widetilde{X}_0=i, \widetilde{X}_1=j)}{\mathbb{P}(\widetilde{X}_0=i)}\nonumber\\
            &\geq \sum_{i,j}-\mathbb{P}(\widetilde{X}_0=i, \widetilde{X}_1=j)
            \log \frac{\frac{1}{\pi \lambda}\|\psi\|_\infty \|\overline{\psi}\|_\infty e^{\|\varphi\|_\infty}\rho(\overline{B}(x_i^{(n)}, \varepsilon_n))\rho(\overline{B}(x_i^{(n)}, \varepsilon_n))}{\psi_{\rm min}\rho(\overline{B}(x_i^{(n)}, \kappa^{-1}\varepsilon_n))}\nonumber\\
            &\geq C+\sum_{i,j}-\mathbb{P}(\widetilde{X}_0=i, \widetilde{X}_1=j)\log \frac{c\kappa^s\rho(\overline{B}(x_i^{(n)}, \kappa^{-1}\varepsilon_n))\rho(\overline{B}(x_j^{(n)}, \varepsilon_n))}{\rho(\overline{B}(x_i^{(n)}, \kappa^{-1}\varepsilon_n))}
            \quad (\because \eqref{eq:homogeneous_lower_measure}) \nonumber\\
            &\geq C-\log c\kappa^s+\sum_{j}-\mathbb{P}(\widetilde{X}_1=j) \log \rho(\overline{B}(x_j^{(n)},\varepsilon_n) )\nonumber\\
            &\geq C-\log c\kappa^s+\sum_{j}-\mathbb{P}(\widetilde{X}_1=j) \log (c\kappa^{-s})^{n} \rho(\overline{B}(x_j^{(n)},L))
            \quad (\because \eqref{eq:lower_ball}\ \mbox{in Lemma}\ \ref{lem:measure_ball}) \nonumber\\
            &\geq C+(n-1) \log (c\kappa^s)+\log \rho_L
            \label{eq:lower_relative_X1X0}
    \end{align}
    where $C=-\log (\pi \lambda \psi_{\rm min })^{-1}\|\psi\|_\infty \|\overline{\psi}\|_\infty e^{\|\varphi\|_\infty}$ and $\rho_L=\min_{x\in Y}\rho(\overline{B}(x,L))$.

    Combining \eqref{eq:sum_relative_entorpy} and \eqref{eq:lower_relative_X1X0}, we have
    \begin{align*}
        \frac{I(X;Y)}{N}
        &\geq \frac{1}{N}H(\widetilde{X}_0)+\frac{N-1}{N}H(\widetilde{X}_1\mid \widetilde{X}_0)-\frac{1}{N}\sum_{m=0}^{N-1}H(\widetilde{X}_m\mid \widetilde{Y}_{m,0})\\
        &\geq \frac{1}{N}H(\widetilde{X}_0)
        -\frac{N-1}{N}\left(C+(n-1) \log (c\kappa^s)+\log \rho_L\right)
        -\left(\log 2+ \log C3^s+\frac{\log \ell_n}{L}\right).
    \end{align*}

    Hence we have
    \begin{align*}
        \frac{R_\mu(\varepsilon)}{|\log \varepsilon|}
        &\geq \frac{1}{(n+1)\log \kappa}\left(\left(C+(n-1) \log (c\kappa^s)+\log \rho_L\right)
        -\left(\log 2+ \log C3^s+\frac{\log \ell_n}{L}\right)\right)\\
        &\geq\frac{\log c +s\log \kappa}{\log \kappa}\frac{n-1}{n+1}+\frac{C+\log \rho_L-\log 2+\log C3^s}{(n+1)\log \kappa}-\frac{\log (C\kappa^s)^n \ell_0}{(n+1)L\log \kappa}
        \quad (\because \eqref{eq:upper_sep})\\
        &=\frac{\log c +s\log \kappa}{\log \kappa}\frac{n-1}{n+1}+\frac{C+\log \rho_L-\log 2+\log C3^s-L^{-1}\log \ell_0}{(n+1)\log \kappa}-\frac{n}{n+1}\frac{\log (C\kappa^s)}{L\log \kappa}
    \end{align*}
    
    Taking $\liminf_{\varepsilon \to 0}\ (\liminf_{n\to\infty})$ on both sides we have
    \begin{align*}
        \underline{\rdim}(\mu_\varphi)\geq \frac{\log c +s\log \kappa}{\log \kappa}+0-\frac{\log (C\kappa^s)}{L\log \kappa}
    \end{align*}
    Letting $\kappa\to\infty$ and $L\to\infty$, we complete the proof.
\end{proof}

%----------------------------------------------
% **********************************************************
% Appendices
% **********************************************************

\setcounter{equation}{0}
 \renewcommand{\theequation}{\Alph{section}.\arabic{equation}}

\appendix

%------------------------
\section{Proof of Proposition \ref{main:mean_Hausdorff_Minkowski}} \label{subsec:main_thm_mean_hausdorff_metric}
% Throughout this section, let $(\mathcal{Y},d_{\mathcal{Y}})$ be a compact metric space and $T:\mathcal{Y}\rightarrow \mathcal{Y}$ be a continuous map.
% For $N \in \mathbb{N}$ set
% \begin{align*}
%     d_N(x,y)=\max_{0\leq n\leq N-1} d_{\mathcal{Y}}(T^nx, T^ny).
% \end{align*}

%\subsubsection{Lower bound of Mean Hausdorff dimension} 

For the full shift over an Ahlfors $s$-regular space, we have the following:
\begin{proposition}
\label{prop:mean_Hausdorff}
    \begin{align*}
        \mdim_H(\mathcal{X}^{\mathbb{N}_0}, \sigma, d)\geq s.
    \end{align*}
\end{proposition}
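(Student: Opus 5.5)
We prove the lower bound by a mass distribution (Frostman-type) argument on the metric space $(\mathcal{X}^{\mathbb{N}_0}, d_N)$, using the product measure $\rho^{\otimes\mathbb{N}_0}$ as the test measure. Throughout we may assume $\diam\mathcal{X}\le 1$.

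\textbf{Step 1: reduce $d_N$-balls to cylinder-type sets.} Fix $\varepsilon\in(0,1)$ and a set $U\subset\mathcal{X}^{\mathbb{N}_0}$ with $\diam_{d_N}U=r<\varepsilon$, and pick $\underline{x}\in U$. For $0\le n\le N-1$ we have $d(\sigma^n\underline{y},\sigma^n\underline{x})\le r$ for all $\underline{y}\in U$. Keeping only the leading term of the series defining $d$ gives $d_{\mathcal{X}}(y_{n},x_{n})\le r$ for $0\le n\le N-1$. Hence
\begin{align*}
U\subset \prod_{n=0}^{N-1}\overline{B}(x_n,r)\times\mathcal{X}^{\mathbb{N}_{\ge N}}.
\end{align*}

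\textbf{Step 2: a Frostman estimate.} Set $\mu=\rho^{\otimes\mathbb{N}_0}$, a Borel probability measure. By the upper Ahlfors bound \eqref{eq:Ahlfors_regular}, Step 1 gives
\begin{align*}
\mu(U)\le \prod_{n=0}^{N-1}\rho(\overline{B}(x_n,r))\le c^N r^{sN}=c^N(\diam_{d_N}U)^{sN}.
\end{align*}

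\textbf{Step 3: the Hausdorff-type sum.} Let $\{U_k\}$ be any countable cover by sets of $d_N$-diameter $<\varepsilon$. Then
\begin{align*}
1\le\sum_k\mu(U_k)\le c^N\sum_k(\diam U_k)^{sN},
\end{align*}
so $\sum_k(\diam U_k)^{sN}\ge c^{-N}$. For any $t<sN$ we obtain
\begin{align*}
\sum_k(\diam U_k)^{t}\ge \varepsilon^{t-sN}\sum_k(\diam U_k)^{sN}\ge \varepsilon^{t-sN}c^{-N},
\end{align*}
using $\diam U_k<\varepsilon$. Take $t=N(s-\log c/|\log\varepsilon|)$ when $c\ge1$, which is the case for this constant. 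Then $\varepsilon^{t-sN}=c^{N}$, so the sum is at least $1$, i.e. $\mathcal{H}^t_\varepsilon\ge1$.

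\textbf{Step 4: conclude.} Step 3 gives $\dim_H(\mathcal{X}^{\mathbb{N}_0},d_N,\varepsilon)\ge N\bigl(s-\log c/|\log\varepsilon|\bigr)$. Dividing by $N$, taking $\limsup_{N\to\infty}$, and then letting $\varepsilon\to0$ yields $\mdim_H\ge s$.

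The main obstacle is Step 3: the multiplicative constant $c^N$ grows exponentially in $N$. It is harmless only because it is absorbed by the factor $\varepsilon^{-(sN-t)}$, and this loss of $\log c/|\log\varepsilon|$ vanishes in the $\varepsilon\to0$ limit.
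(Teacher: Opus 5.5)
Your proof is correct and follows the paper's argument. Both use the product measure $\rho^{\otimes\mathbb{N}_0}$ as a Frostman-type mass distribution, bound the measure of a set of $d_N$-diameter $r$ by $c^N r^{sN}$ using the first-coordinate comparison, and absorb the exponential factor $c^N$ by lowering the exponent slightly below $sN$. The only cosmetic difference is that you tie the loss to $\varepsilon$ through $t=N(s-\log c/|\log\varepsilon|)$, whereas the paper fixes $\delta>0$, uses $t=(s-\delta)N$ for all $\varepsilon$ with $c\,\varepsilon^{\delta}<1$, and then lets $\delta\to0$.
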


Proposition \ref{main:mean_Hausdorff_Minkowski} follows from 
Proposition \ref{prop:mean_Hausdorff}, the inequality \eqref{eq:mean_Hausdorff_mean_metric} and 
$\mdim_M(\mathcal{X}^{\mathbb{N}_0}, \sigma, d)=\dim_B\mathcal{X}=s$, 
which is given by Theorem D in \cite{CPV24}.

Since $\mathcal{X}$ is Ahlfors $s$-regular, we have a probability measure $\rho$ satisfying \eqref{eq:Ahlfors_regular}.
Considering the product measure of $\rho$, we obtain the lower bound by an argument similar to Frostman's lemma.
%We omit the proof here but include it in the appendix for the sake of completeness, as it follows from a standard argument.

%We now give the proof of Proposition \ref{prop:mean_Hausdorff}.

\begin{proof}[Proof of Proposition \ref{prop:mean_Hausdorff}]
    Let $(\mathcal{X}, d_{\mathcal{X}})$ be an Ahlfors $s$-regular compact metric space.
    Let $\rho$ be the probability measure satisfying \eqref{eq:Ahlfors_regular} and $\mu=\rho^{\otimes \mathbb{N}_0}$ be the product measure on $\mathcal{X}^{\mathbb{N}_0}$.
    
    Fix $N\geq 1$ and $\varepsilon$.
    Take an open cover $\{U_i\}_{i=1}^m$ with $\diam U_i=:R_i<\varepsilon$.
    For each $i$ take $\underline{x}^{(i)}\in U_i$.
    Then we have
    \begin{align*}
        U_i\subset B(\underline{x}^{(i)}, R_i; d_N)
        \subset \prod_{n=0}^{N-1} B(x_n^{(i)}, R_i;d_\mathcal{X})\times \mathcal{X}^{\mathbb{N}_0}.
    \end{align*}
    Hence we have
    \begin{align*}
        \mu(U_i)\leq \prod_{n=0}^{N-1} \rho(B(x_n^{(i)}, R_i;d_{\mathcal{X}}))\leq C^N R_i^{sN}.
    \end{align*}

    Then we have
    \begin{align*}
        1\leq \mu(\mathcal{X}^{\mathbb{N}_0})\leq \sum_{i=1}^{m}\mu(U_i)\leq C^N \sum_{i=1}^{m} R_i^{sN}.
    \end{align*}
    Take $\delta>0$.
    Since $R_i^s=R_i^{s-\delta}R_i^\delta\leq R_i^{s-\delta}\varepsilon^\delta$ for all $i$, we have
    \begin{align}
        1\leq C^N \varepsilon^{\delta N}\sum_{i=1}^m R_i^{(s-\delta)N}.
    \end{align}
    Since the open set is arbitrary, we have
    \begin{align*}
        1\leq C^N\varepsilon^{\delta N} \mathcal{H}_\varepsilon^{(s-\delta)N}(\mathcal{X}^{\mathbb{N}_0}, d_N).
    \end{align*}
   Taking $\varepsilon>0$ small enough to satisfy $C^N\varepsilon^{\delta N}<1$, we have $ \mathcal{H}_\varepsilon^{(s-\delta)N}(\mathcal{X}^{\mathbb{N}_0}, d_N)>1$.
   Hence we have
   \begin{align}
       \dim_H(\mathcal{X}^{\mathbb{N}_0}, d_N, \varepsilon)\geq (s-\delta)N.
       \label{eq: Hausdorff_fullshift_dN}
   \end{align}

   Dividing \eqref{eq: Hausdorff_fullshift_dN} by $N$, letting $\varepsilon\to0$, we have
   \begin{align*}
       \mdim_H(\mathcal{X}^{\mathbb{N}_0}, \sigma, d)\geq s-\delta.
   \end{align*}
   Since $\delta>0$ is arbitrary, we complete the proof.
\end{proof}

%\subsubsection{Upper bound of Mean metric dimension}

Although the following proposition is not needed for the proof of 
Proposition \ref{main:mean_Hausdorff_Minkowski} thanks to Theorem D 
in \cite{CPV24}, we include it for completeness, as it follows 
straightforwardly from the Ahlfors regularity.

\begin{proposition}
\label{prop:mean_metric}
    \begin{align*}
        \mdim_M(\mathcal{X}^{\mathbb{N}_0}, \sigma, d)\leq s
    \end{align*}
\end{proposition}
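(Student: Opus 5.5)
To prove $\mdim_M(\mathcal{X}^{\mathbb{N}_0},\sigma,d)\leq s$, I bound the covering number $\#(\mathcal{X}^{\mathbb{N}_0},d_N,0,\varepsilon)$ directly, using the product structure of $d$ and Lemma \ref{lem:upper_sep} for the growth of separating sets in $\mathcal{X}$. Since $\varphi\equiv 0$, the covering number is just the minimal number of sets of $d_N$-diameter $<\varepsilon$ covering $\mathcal{X}^{\mathbb{N}_0}$. Fix $\varepsilon\in(0,1)$ and choose $k\in\mathbb{N}$ with $2^{-k}\cdot 2\,\diam\mathcal{X}<\varepsilon/4$; this $k$ is of order $|\log\varepsilon|$ but is independent of $N$.

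The key observation is the tail estimate: for $\underline{x},\underline{y}$ with $d_{\mathcal{X}}(x_n,y_n)\leq \varepsilon/8$ for all $0\leq n\leq N+k-1$, we get for each $0\leq m\leq N-1$
\begin{align*}
d(\sigma^m\underline{x},\sigma^m\underline{y})\leq \sum_{j=0}^{k-1}\frac{\varepsilon/8}{2^{j}}+\sum_{j\geq k}\frac{\diam\mathcal{X}}{2^j}<\frac{\varepsilon}{4}+\frac{\varepsilon}{4}=\frac{\varepsilon}{2}.
\end{align*}
Hence, if $\{x_1,\ldots,x_\ell\}$ is a maximal $\varepsilon/8$-separating set of $\mathcal{X}$ (so the balls $\overline{B}(x_i,\varepsilon/8)$ cover $\mathcal{X}$), then the cylinder sets $\prod_{n=0}^{N+k-1}\overline{B}(x_{i_n},\varepsilon/16)$ would work if the radii were halved. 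To be careful, I will use a maximal $\varepsilon/16$-separating set, so each ball $\overline{B}(x_i,\varepsilon/16)$ has diameter $\leq\varepsilon/8$ and the resulting cylinders have $d_N$-diameter $<\varepsilon$. This gives
\begin{align*}
\#(\mathcal{X}^{\mathbb{N}_0},d_N,0,\varepsilon)\leq \ell(\varepsilon/16)^{N+k},
\end{align*}
where $\ell(r)$ is the maximal cardinality of an $r$-separating set.

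Next I bound $\ell(r)$ by $C' r^{-s}$. Taking $\lambda=2$ and $\varepsilon_0$ fixed in Lemma \ref{lem:upper_sep} only gives $\ell_n\leq (C2^s)^n\ell_0$, i.e. exponent $s+\log_2 C$, which is too weak. Instead I apply the $(C,s)$-homogeneity \eqref{eq:homogeneous_upper_space} in one step with $\overline{B}(x,\diam\mathcal{X})=\mathcal{X}$ and $k=\diam\mathcal{X}/r$, which gives directly $\ell(r)\leq C(\diam\mathcal{X}/r)^s$. (Equivalently, one can use a volume argument: the disjoint balls $\overline{B}(x_i,r/2)$ each have $\rho$-measure $\geq c^{-1}(r/2)^s$ by \eqref{eq:Ahlfors_regular}, so $\ell(r)\leq c\,2^s r^{-s}$.) Then
\begin{align*}
\frac{1}{N}\log\#(\mathcal{X}^{\mathbb{N}_0},d_N,0,\varepsilon)\leq \frac{N+k}{N}\bigl(\log C'+s\log(16/\varepsilon)\bigr),
\end{align*}
so $S(\mathcal{X}^{\mathbb{N}_0},\sigma,d,0,\varepsilon)\leq \log C'+s\log 16+s|\log\varepsilon|$. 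Dividing by $|\log\varepsilon|$ and letting $\varepsilon\to0$ yields $\overline{\mdim}_M\leq s$.

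The main obstacle is obtaining the sharp exponent $s$ in the bound on $\ell(r)$, rather than $s+\log_2 C$ as iterating Lemma \ref{lem:upper_sep} would give; the one-step volume (or homogeneity) bound resolves this. The tail contribution $k$ is harmless because it is fixed as $N\to\infty$.
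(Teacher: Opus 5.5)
Your proof is correct, and its skeleton is the same as the paper's. Both cover by cylinders over the first $N+k$ coordinates, with $k$ depending only on $\varepsilon$, so that $S(\mathcal{X}^{\mathbb{N}_0},\sigma,d,\varepsilon)$ is bounded by the logarithm of a separating (or spanning) number of $\mathcal{X}$.

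The genuine difference is how that separating number is bounded.
\begin{itemize}
\item The paper does not fix $\lambda=2$ in Lemma \ref{lem:upper_sep}. It takes $\lambda=\kappa$, places $\kappa^{-(n+1)}\leq\varepsilon<\kappa^{-n}$, and gets a bound of the form $\ell\leq (C\kappa^s)^{n+O(1)}\ell_0$. This gives $\overline{\mdim}_M\leq (\log C+s\log\kappa)/\log\kappa$, and letting $\kappa\to\infty$ removes the $\log C$ term. So the obstacle you point to is real only at a fixed scale ratio, and the paper removes it with a second limit.
\item You apply \eqref{eq:homogeneous_upper_space} once, at ratio $k=\diam\mathcal{X}/r$ (or use the Ahlfors lower bound as a packing argument). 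This gives $\ell(r)\leq C'r^{-s}$ with the sharp exponent immediately.
\end{itemize}
The paper's iteration also needs \eqref{eq:homogeneous_upper_space} with $C$ independent of $\kappa$, which is exactly the hypothesis you use in one step. So your route assumes nothing more, is shorter, and keeps the constant bookkeeping cleaner. The multi-scale $\kappa$ device in the paper mainly mirrors the structure of its lower-bound argument for Theorem \ref{main:Gibbs_RDD}.

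One small point in your parenthetical volume argument: separation is defined by $d_{\mathcal{X}}(x,y)\geq r$, so the closed balls $\overline{B}(x_i,r/2)$ may touch. Use radius $r/3$ instead, which gives $\ell(r)\leq c\,3^s r^{-s}$ and leaves the exponent unchanged.
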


\begin{proof}[Proof of Proposition \ref{prop:mean_metric}]
Fix $\varepsilon>0$.
Take $m\in \mathbb{N}$ such that $\displaystyle \sum_{i\geq m}2^{-(i+1)}\leq \varepsilon/2 $.
Let $\{x_1,\ldots, x_\ell\}$  be a minimal $\varepsilon/2$-spanning set, that is, $\displaystyle \mathcal{X}\subset \bigcup_{i=1}^\ell B(x_i, \varepsilon/2)$ and $\ell$ is minimal with this property.

Fix $N\geq 1$.
For $k_0\cdots k_{N+m-1}\in \{1,\ldots, \ell\}^{N+m-1}$, set
\begin{align*}
    B_{k_0\cdots k_{N+m-1}}=\prod_{i=0}^{N+m-1}B(x_{k_i},\varepsilon/2)\times \mathcal{X}^{\mathbb{N}_0}.
\end{align*}
For each $0\leq i\leq N-1$ and $\underline{u},\underline{v}\in B_{k_0\cdots k_{N+m-1}}$ we have
\begin{align*}
    d(\sigma^i \underline{u},\sigma^i \underline{v})
    &\leq \sum_{j=0}^{N-i+m-1}\frac{d_{\mathcal{X}}(u_{i+j}, v_{i+j})}{2^{j+1}}+\sum_{j\geq N-i+m-1}\frac{1}{2^{i+1}}\\
    & <\frac{\varepsilon}{2}+\frac{\varepsilon}{2}=\varepsilon.
\end{align*}
Hence we have ${\rm diam}_{d_N} B_{k_0\cdots k_{N+m-1}}<\varepsilon$ and this implies
\begin{align*}
    \#(\mathcal{X}^{\mathbb{N}_0}, d_N, \varepsilon)\leq \ell^{N+m-1}.
\end{align*}
Hence we have
\begin{align*}
    S(\mathcal{X}^{\mathbb{N}_0}, \sigma, d,\varepsilon)
    &=\lim_{N\to\infty} \frac{1}{N}\log\#(\mathcal{X}^{\mathbb{N}_0}, d_N, \varepsilon) \\
    &\leq \lim_{N\to\infty}\frac{N+m-1}{N}\log \ell
    =\log \ell.
\end{align*}

Let $\varepsilon_0=2^{-2}$ and $\ell_0= \#_{{\rm sep}}(\mathcal{X},d_{\mathcal{X}}, \varepsilon_0)$.
Take $\kappa\geq 2$.
For each $n\in \mathbb{N}_0$ set $\varepsilon_n=\varepsilon_0\kappa^{-n}$.
Take $n$ such that $\kappa^{-(n+1)}\leq \varepsilon<\kappa^{-n}$.
Taking into account the relation between the cardinalities of spanning sets and separating sets, we have by Lemma \ref{lem:upper_sep},
\begin{align*}
 %   \#_{{\rm span}}(\mathcal{X},d_{\mathcal{X}}, \varepsilon/2)
   \ell &\leq \#_{{\rm sep}}(\mathcal{X},d_{\mathcal{X}}, \varepsilon/4)\\
    &\leq \#_{{\rm sep}}(\mathcal{X},d_{\mathcal{X}}, \varepsilon_{n})
    \leq (C\kappa^s)^{n} \ell_0.
\end{align*}

Then we have
\begin{align*}
    \frac{S(\mathcal{X}^{\mathbb{N}_0}, \sigma, d, \varepsilon)}{(n+1)\log \kappa}
    \leq \frac{\log (C\kappa^s)^{n}\ell_0}{(n+1)\log \kappa}
    =\frac{n}{n+1}\frac{\log C+s\log \kappa}{\log \kappa}+\frac{\log \ell_0}{(n+1)\log \kappa}.
\end{align*}
Taking  $\limsup_{\varepsilon\to 0}\ (\limsup_{n\to\infty})$, we have 
\begin{align*}
    \overline{{\rm mdim}}_M(\mathcal{X}^{\mathbb{N}_0}, \sigma, d)
    \leq \frac{\log C+s\log \kappa}{\log \kappa}.
\end{align*}
Letting $\kappa\to\infty$, we complete the proof.
\end{proof}

%%%%%%%%%%%%%%%%%%%%%%%%%%%%%%%%%%%%%%%%%%%%%%%%
\vspace*{33pt}

\noindent
\textbf{Acknowledgement.}~ 
The first author was partially supported by JSPS KAKENHI Grant Number JP26K17005. 
The second author was partially supported by JSPS KAKENHI Grant Number JP21K13816.
The authors would like to thank Masaki Tsukamoto and Masayuki Asaoka for helpful comments.

\vspace{11pt }
\noindent
\textbf{Data Availability.}~
Data sharing not applicable to this article as no datasets were generated or analyzed during the current study.

%%%%%%%%%%%%%%%%%%%%%%%%%%%%%%%%%%%%%%%%%%%%%%%%%%%%%%%%

\bibliographystyle{alpha}
\bibliography{DV_Gibbs.bib}

% \bib, bibdiv, biblist are defined by the amsrefs package.
\begin{bibdiv}
\begin{biblist}

\bib{BCLMS2011}{article}{
      author={Baraviera, A.~T.},
      author={Cioletti, L.~M.},
      author={Lopes, A.~O.},
      author={Mohr, Joana},
      author={Souza, Rafael~Rigao},
       title={On the general one-dimensional {{\(XY\)}} model: positive and
  zero temperature, selection and non-selection},
    language={English},
        date={2011},
        ISSN={0129-055X},
     journal={Rev. Math. Phys.},
      volume={23},
      number={10},
       pages={1063\ndash 1113},
}

\bib{BG2000}{article}{
      author={Bylund, Per},
      author={Gudayol, Jaume},
       title={On the existence of doubling measures with certain regularity
  properties},
    language={English},
        date={2000},
        ISSN={0002-9939},
     journal={Proc. Am. Math. Soc.},
      volume={128},
      number={11},
       pages={3317\ndash 3327},
         url={hdl.handle.net/2445/7706},
}

\bib{Bow75}{book}{
      author={Bowen, Rufus},
       title={Equilibrium states and the ergodic theory of {Anosov}
  diffeomorphisms},
    language={English},
      series={Lect. Notes Math.},
   publisher={Springer, Cham},
        date={1975},
      volume={470},
}

\bib{BSS2023}{book}{
      author={B{\'a}r{\'a}ny, Bal{\'a}zs},
      author={Simon, K{\'a}roly},
      author={Solomyak, Boris},
       title={Self-similar and self-affine sets and measures},
   publisher={American Mathematical Society},
        date={2023},
      volume={276},
}

\bib{CLS20}{article}{
      author={Cioletti, Leandro},
      author={Lopes, Artur~O.},
      author={Stadlbauer, Manuel},
       title={Ruelle operator for continuous potentials and {DLR}-{Gibbs}
  measures},
    language={English},
        date={2020},
        ISSN={1078-0947},
     journal={Discrete Contin. Dyn. Syst.},
      volume={40},
      number={8},
       pages={4625\ndash 4652},
}

\bib{CPV24}{article}{
      author={Carvalho, Maria},
      author={Pessil, Gustavo},
      author={Varandas, Paulo},
       title={A convex analysis approach to the metric mean dimension: limits
  of scaled pressures and variational principles},
    language={English},
        date={2024},
        ISSN={0001-8708},
     journal={Adv. Math.},
      volume={436},
       pages={54},
        note={Id/No 109407},
}

\bib{CT06}{book}{
      author={Cover, Thomas~M.},
      author={Thomas, Joy~A.},
       title={Elements of information theory},
    language={English},
     edition={2nd ed.},
   publisher={Hoboken, NJ: John Wiley \& Sons},
        date={2006},
        ISBN={0-471-24195-4; 0-471-74881-1},
}

\bib{D68}{article}{
      author={Dobrushin, R.~L.},
       title={The description of a random field by means of conditional
  probabilities and conditions of its regularity},
    language={Russian},
        date={1968},
        ISSN={0040-361X},
     journal={Teor. Veroyatn. Primen.},
      volume={13},
       pages={201\ndash 229},
}

\bib{Fr14}{article}{
      author={Fraser, Jonathan~M.},
       title={Assouad type dimensions and homogeneity of fractals},
    language={English},
        date={2014},
        ISSN={0002-9947},
     journal={Trans. Am. Math. Soc.},
      volume={366},
      number={12},
       pages={6687\ndash 6733},
         url={hdl.handle.net/10023/5941},
}

\bib{Jen06}{article}{
      author={Jenkinson, Oliver},
       title={Ergodic optimization},
    language={English},
        date={2006},
        ISSN={1078-0947},
     journal={Discrete Contin. Dyn. Syst.},
      volume={15},
      number={1},
       pages={197\ndash 224},
}

\bib{KD1994}{article}{
      author={Kawabata, Tsutomu},
      author={Dembo, Amir},
       title={The rate-distortion dimension of sets and measures},
        date={1994},
     journal={IEEE transactions on information theory},
      volume={40},
      number={5},
       pages={1564\ndash 1572},
}

\bib{LMMS2015}{article}{
      author={Lopes, A.~O.},
      author={Mengue, J.~K.},
      author={Mohr, J.},
      author={Souza, R.~R.},
       title={Entropy and variational principle for one-dimensional lattice
  systems with a general {{\emph{a priori}}} probability: positive and zero
  temperature},
    language={English},
        date={2015},
        ISSN={0143-3857},
     journal={Ergodic Theory Dyn. Syst.},
      volume={35},
      number={6},
       pages={1925\ndash 1961},
}

\bib{LMST09}{article}{
      author={Lopes, A.~O.},
      author={Mohr, J.},
      author={Souza, R.~R.},
      author={Thieullen, Ph.},
       title={Negative entropy, zero temperature and {Markov} chains on the
  interval},
    language={English},
        date={2009},
        ISSN={1678-7544},
     journal={Bull. Braz. Math. Soc. (N.S.)},
      volume={40},
      number={1},
       pages={1\ndash 52},
}

\bib{LinTsu18}{article}{
      author={Lindenstrauss, Elon},
      author={Tsukamoto, Masaki},
       title={From rate distortion theory to metric mean dimension: variational
  principle},
    language={English},
        date={2018},
        ISSN={0018-9448},
     journal={IEEE Trans. Inf. Theory},
      volume={64},
      number={5},
       pages={3590\ndash 3609},
}

\bib{LinTsu19}{article}{
      author={Lindenstrauss, Elon},
      author={Tsukamoto, Masaki},
       title={Double variational principle for mean dimension},
    language={English},
        date={2019},
        ISSN={1016-443X},
     journal={Geom. Funct. Anal.},
      volume={29},
      number={4},
       pages={1048\ndash 1109},
}

\bib{LW20}{article}{
      author={Leplaideur, Renaud},
      author={Watbled, Fr{\'e}d{\'e}rique},
       title={{GENERALIZED CURIE-WEISS POTTS MODELS AND QUADRATIC PRESSURE IN
  ERGODIC THEORY}},
        date={2020-07},
     journal={{Journal of Statistical Physics}},
      volume={181},
      number={1},
         url={https://hal.science/hal-02545170},
}

\bib{Tsu20}{article}{
      author={Tsukamoto, Masaki},
       title={Double variational principle for mean dimension with potential},
    language={English},
        date={2020},
        ISSN={0001-8708},
     journal={Adv. Math.},
      volume={361},
       pages={53},
        note={Id/No 106935},
}

\bib{Tsu25}{article}{
      author={Tsukamoto, Masaki},
       title={On an analogue of the {Hurewicz} theorem for mean dimension},
    language={English},
        date={2025},
        ISSN={0021-2172},
     journal={Isr. J. Math.},
      volume={269},
      number={1},
       pages={241\ndash 268},
}

\bib{VK88}{article}{
      author={Vol'berg, A.~L.},
      author={Konyagin, S.~V.},
       title={On measures with the doubling condition},
    language={English},
        date={1988},
        ISSN={0025-5726},
     journal={Math. USSR, Izv.},
      volume={30},
      number={3},
       pages={629\ndash 638},
}

\end{biblist}
\end{bibdiv}

\end{document}